\newtheorem{theorem}{Theorem}
\newtheorem{lemma}[theorem]{Lemma} 
\newtheorem{proposition}[theorem]{Proposition} 
\newtheorem{corollary}[theorem]{Corollary}
\theoremstyle{definition}
\newtheorem{remark}{Remark}
\def\C{\bf \mbox{I\hspace{-.47em}C}}
\title{
A Dolbeault Lemma for Temperate Currents.
}
 \date{21.05.2020}
\author{Henri Skoda \thanks{Corresponding author: Henri Skoda, Sorbonne University, IMJ-PRG
Campus Pierre et Marie Curie,
4 Place Jussieu, 75005 Paris France, E-mail: henri.skoda@imj-prg.fr}
}
\begin{document}
\def\R{{\bf \mbox{I\hspace{-.17em}R}}}
\def\N{\mbox{I\hspace{-.15em}N}}

\maketitle
 
  \hspace{2cm} $Dedicated\, to\, the\, memory\, of\, Pierre\, Dolbeault. $\\

\begin{abstract}
We  consider a bounded open Stein subset $\Omega$ of a complex Stein manifold $X$ of dimension $n$. 
We prove that if $f$ is a current on $X$ of bidegree $(p,q+1)$, $\overline\partial$-closed on $\Omega$,
 we can find  a current $u$ on $X$ of bidegree $(p,q)$ which is a  solution of the equation 
$\overline\partial u=f$ in $\Omega$. 
In other words, we prove that the Dolbeault complex of temperate currents on $\Omega$ 
(\emph{i.e.} currents on $\Omega$ which extend to currents on $X$)
is concentrated in degree $0$. 
Moreover if $f$ is a current on $X=\C^n$ of order $k$, then we can find a solution $u$ which is a current on $\C^n$ of order $k+2n+1$. 
\end{abstract}

\textbf{Keywords}: Stein open subset of $\C^n$ or of a Stein manifold, $L^2$ estimates, $\bar\partial$-operator, Dolbeault $\bar\partial$-complex, temperate distributions and currents, temperate cohomology, Sobolev spaces.

\section{Introduction} \label{SectionIntroduction}

We will prove the following  result in the same way as the famous  `` Dolbeault-Grothendieck '' lemma for $\bar\partial$.

\begin{theorem}
\label{partialtemperedchomology}
 Let $\Omega$ be a bounded Stein open subset of $\C^n$ 
and let $f$ be a given current of bidegree $(p,q+1)$ on $\C^n$ 
(with compact support) 
which is $\bar\partial$-closed on $\Omega$.
Then there exists a current $u$ of bidegree $(p,q)$ 
(with compact support) in $\C^n$ such that:
\begin{equation}\label{}
\bar\partial u=f,
\end{equation}
$in\,\Omega$.\\
Moreover if $f$ is of order $k$ (resp. if $f\in H^{-s}_{(p,q+1)}(\C^n)$  for some $s> 0$), 
we can find a solution $u$ is of order at most $k+2n+1$
  (resp. $u\in H_{(p,q)}^{-s-2n-1}(\C^n)$, more precisely if $k$ is the integer such that:
 $s\le k<s+1$, for every $r>k$, we can find $u\in H_{(p,q)}^{-r-2n}(\C^n)$).
\end{theorem}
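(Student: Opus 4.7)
The strategy is to reduce the problem to Hörmander's $L^{2}$ existence theorem for $\bar\partial$ on the bounded pseudoconvex open set $\Omega$, and then use a duality argument in Sobolev spaces to transfer the $L^{2}$ result to currents of finite order. The preliminary observation is that a compactly supported current of order $k$ on $\C^{n}$ belongs to $H^{-s}(\C^{n})$ for every $s>k+n$, thanks to the Sobolev embedding $H^{n+\varepsilon}_{loc}\hookrightarrow C^{0}_{loc}$ in real dimension $2n$. Hence it suffices to prove the Sobolev version of the statement and read off the order bound at the end.

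\textbf{Main steps.} First, when $f|_{\Omega}\in L^{2}_{(p,q+1)}$ and $\bar\partial f=0$ on $\Omega$, Hörmander's theorem produces $v\in L^{2}_{(p,q)}(\Omega)$ with $\bar\partial v=f$ on $\Omega$ together with a quantitative norm control; extending $v$ by zero outside $\Omega$ yields a compactly supported $L^{2}$ solution on $\C^{n}$. Second, extend to $f\in H^{-s}$ by duality: for a smooth test form $\phi$ on $\C^{n}$ of bidegree $(n-p,n-q)$, solve on $\Omega$ the adjoint equation $\bar\partial^{*}\psi=\phi|_{\Omega}$ via Hörmander, with $\psi$ satisfying a Sobolev bound in $H^{s+2n+1}(\Omega)$, and define
\[
\langle u,\phi\rangle \;:=\; \pm\,\langle f,\psi\rangle.
\]
One then checks that this pairing is well defined (since $f\in H^{-s}$ pairs with $\psi\in H^{s+2n+1}\subset H^{s}$), that it is continuous in an appropriate norm of $\phi$, and that it depends only on $\phi|_{\overline\Omega}$. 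These three facts together show that $u$ is a current on $\C^{n}$ with support in $\overline\Omega$, of Sobolev regularity $H^{-s-2n-1}$, satisfying $\bar\partial u=f$ on $\Omega$. Third, translate back into the order scale: if $f$ has order $k$, take $s$ slightly larger than $k+n$ and conclude that $u\in H^{-k-3n-1-\varepsilon}$, which is a current of order at most $k+2n+1$.

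\textbf{Main obstacle.} The delicate point is the bookkeeping of the Sobolev loss, so as to arrive at exactly $2n+1$ derivatives. Hörmander's interior $L^{2}$ theorem in itself induces no loss, but the duality argument requires two applications of the Sobolev embedding $H^{n+\varepsilon}\hookrightarrow C^{0}$ in real dimension $2n$: one to convert the order hypothesis on $f$ into Sobolev regularity, and one to convert the Sobolev bound on $u$ into an order bound. Together with the strict inequality in the embedding, these account for the factor $2n+1$, and with more care one should recover the sharper statement for every $r>k$. A secondary subtlety is ensuring that $u$ really defines a current on $\C^{n}$ with compact support, which depends on showing that $\phi\mapsto\langle f,\psi\rangle$ is controlled by a norm of $\phi$ depending only on a fixed neighborhood of $\overline\Omega$; this is precisely the step at which the ``temperate'' character of the solution is obtained.
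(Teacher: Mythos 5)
Your reduction of the order statement to the Sobolev statement is fine, but the core of your argument --- the duality step --- has a genuine gap, and it is precisely the difficulty that the paper's proof is designed to circumvent. You propose to solve the adjoint equation $\bar\partial^{*}\psi=\phi$ on $\Omega$ ``via H\"ormander, with $\psi$ satisfying a Sobolev bound in $H^{s+2n+1}(\Omega)$.'' H\"ormander's theorem gives interior $L^{2}$ solvability of $\bar\partial$ with plurisubharmonic weights; it does not give solvability of $\bar\partial^{*}$ with Sobolev estimates \emph{up to the boundary}. The statement you need is a global regularity estimate for the $\bar\partial$-Neumann problem, and for a bounded Stein open set with \emph{no} boundary regularity assumption (the setting of the theorem) no such estimate is available --- global regularity can fail even for smoothly bounded pseudoconvex domains. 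Moreover, even granting such a $\psi$, the pairing $\langle f,\psi\rangle$ is undefined as written: $f\in H^{-s}(\C^{n})$ pairs with elements of $H^{s}(\C^{n})$, whereas your $\psi$ lives only on $\Omega$, and extending it across the non-smooth boundary $\partial\Omega$ with Sobolev control is again exactly the problem one cannot solve by standard extension operators. (There are further unaddressed points: well-definedness of $\langle u,\phi\rangle$ under the non-uniqueness of $\psi$, and the verification that $\bar\partial u=f$ in $\Omega$.)

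A secondary issue is the bookkeeping: your round trip order $\to$ Sobolev $\to$ order loses the Sobolev embedding threshold $n$ (real dimension $2n$) twice, so from $f$ of order $k$ you land in $H^{-k-3n-1-\varepsilon}$, which for a compactly supported distribution only yields order roughly $k+3n+2$, not the claimed $k+2n+1$. The paper avoids both problems by working on the \emph{source} side rather than by duality: it first solves $\tfrac12\Delta v=\bar\partial^{*}f$ on all of $\C^{n}$ by convolution with the elementary solution, so that $g:=f-\bar\partial v$ is harmonic on $\Omega$; the mean value property then converts the order (or $H^{-s}$) hypothesis into a pointwise bound $|g(z)|\le C\,[d(z,\partial\Omega)]^{-l}$; H\"ormander's theorem with the weight $e^{l\log d(z,\partial\Omega)}$ (plurisubharmonic because $\Omega$ is Stein, with no boundary regularity needed) gives $u$ with $\int_{\Omega}|u|^{2}[d(z,\partial\Omega)]^{2l}\,d\lambda<\infty$; and a Hahn--Banach/Taylor-expansion extension theorem of Schwartz type extends such a $u$ across $\partial\Omega$ as a current of order $l$. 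This is where the exact loss $2n+1$ comes from, and it is the step your proposal would need to replace the duality argument with.
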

We say that a current $T$ on $\Omega\subset\C^n$ is temperate
 if and only if it can be extended to $\C^n$.
In other words, we have: 

\begin{corollary}\label{partialtemperedchomology2}
For a given relatively compact open  Stein subset of $\C^n$, the Dolbeault $\bar\partial$-cohomology of temperate currents on $\Omega$ vanishes.

\end{corollary}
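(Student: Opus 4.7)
The plan is to reduce the Corollary directly to Theorem \ref{partialtemperedchomology}, so the proof is essentially an unwinding of definitions together with a cutoff argument to pass from an arbitrary extension on $\C^n$ to a compactly supported one.

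First I would take a $\bar\partial$-closed temperate current $f$ on $\Omega$ of bidegree $(p,q+1)$, representing a class in the $\bar\partial$-cohomology of temperate currents. By the definition of temperate, there exists a current $\tilde f$ on $\C^n$ whose restriction to $\Omega$ equals $f$. A priori $\tilde f$ need not be compactly supported, and need not be $\bar\partial$-closed outside $\Omega$, so Theorem \ref{partialtemperedchomology} does not apply to $\tilde f$ itself.

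To remedy this, I would pick a cutoff function $\chi\in C^\infty_c(\C^n)$ with $\chi\equiv 1$ on a neighborhood of $\overline{\Omega}$ (using that $\Omega$ is relatively compact), and set $F=\chi\,\tilde f$. Then $F$ is a current of bidegree $(p,q+1)$ on $\C^n$ with compact support, and on $\Omega$ we have $F=f$, hence $\bar\partial F=\bar\partial f=0$ on $\Omega$. Thus $F$ satisfies exactly the hypotheses of Theorem \ref{partialtemperedchomology}, so there exists a compactly supported current $u$ on $\C^n$ of bidegree $(p,q)$ with $\bar\partial u=F=f$ on $\Omega$.

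The restriction $u|_\Omega$ is then a current on $\Omega$ admitting an extension to $\C^n$ (namely $u$ itself), so by definition it is a temperate current on $\Omega$, and $\bar\partial(u|_\Omega)=f$ in $\Omega$. This shows that every $\bar\partial$-closed temperate class is $\bar\partial$-exact in the complex of temperate currents, which is exactly the vanishing asserted by the Corollary. There is no genuine obstacle here: the only point to watch is that the cutoff must be identically $1$ on a neighborhood of $\overline{\Omega}$ (not just on $\Omega$) so that the identity $\bar\partial F=f$ holds on all of $\Omega$, but this is automatic since $\Omega\Subset\C^n$.
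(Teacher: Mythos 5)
Your argument is correct and matches the paper's intent: the corollary is presented there as an immediate restatement of Theorem \ref{partialtemperedchomology}, and the cutoff step you describe (multiplying an arbitrary extension by $\chi\in\mathcal D(\C^n)$ with $\chi\equiv 1$ near $\bar\Omega$) is exactly the reduction the paper itself performs at the start of the proof of Theorem \ref{partialtemperedchomology}. The only nit is the final sentence: the identity to preserve is $F=f$ on $\Omega$ (hence $\bar\partial u=f$ there), not ``$\bar\partial F=f$'', and for that $\chi\equiv 1$ on $\Omega$ already suffices.
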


As usual, we denote by $H^{s}_{(p,q)}(\C^n)$  the space of current on $\C^n$ of bidegree $(p,q)$ the coefficients of which are distributions in the Sobolev space $H^s(\C^n)$. A distribution $T\in\mathcal D'(\R^n)$ is of order $k\in\N$ if it is locally a finite linear combination of derivatives of  order at most $k$ of Radon measures on $\R^n$ or equivalenty if $T$ can be extended as a continuous linear form defined on all functions of class $C^k$ with compact support in $\R^n$ or equivalently if for every relatively compact open subset $\Omega\subset \R^n$, all functions $\phi\in {D} (\Omega)$ verify an inequality : $\vert<T,\phi>\vert\le C(\Omega,T)\sup_{x\in\,\Omega} 
\Sigma_{\vert\alpha\vert\le k}\; \vert D_{x}^{\alpha}\phi(x)\vert$, 
 in which the constant $C(\Omega,T)$ only depends on $\Omega$ and $T$. 
Of course a current is of order $k$, if its coefficients are distributions of order $k$.\\

The preceding results are still valid replacing $\C^n$  by a Stein manifold 
(section \ref{Proofoftheorem2}, theorem \ref{partialtemperedchomologyStein2}) and for currents taking their values in a given holomorphic vector bundle. 
But for the sake of simplicity, we begin with the  case of $\C^n$ as in the Dolbeault-Grothendieck lemma : 
the general case of a Stein manifold does only need  more difficult technical tools 
but no truly new ideas or methods. In the case of a Stein manifold the loss of regularity is larger than $2n+1$ because we have to iterate several times the construction made in the case of $\C^n$.

This result answers a question raised by Pierre Schapira in a personal discussion. 
He hopes it can be useful to make significant progress 
in the Microlocal Analysis theories 
highlighted for instance in the papers of M. Kashiwara and P. Schapira, [KS1996] and [Scha2017] 
in which such a temperate cohomology  naturally appears.\\
Even though the result is essentially a consequence of L. H\"ormander's $L^2$ estimates for $\bar\partial$ 
(corollary \ref{beanopenpseudoconvex3}),
 it seems that it  can not be explicitly found in the literature on the subject (with complete proof). 
Let us observe the following features of the result.
 No assumption of smoothness is required for $\Omega$.
The given current current $f$ and the solution $u$ have coefficients in spaces of distribution $H^s(\C^n)$ with $s<0$.
Hence they are never supposed to be smooth 
but with temperate singularities as for instance derivatives of Dirac measures
 and the result is quite different
  from  the most usual regularity results for $\bar\partial$ involving $C^k$ regularity up to the boundary of $\Omega$ ($k\ge0$)
	both for $\Omega$ and for the given differential forms on $\Omega$.
	If $f\in H^s_{(p,q+1)}(\C^n)$ for some $s\ge0$, then $f\in L^2_{(p,q+1)}(\Omega)$
	and the result is an immediate consequence of H\" ormander's theorem
	which provides a solution $u$ in $L^2_{(p,q)}(\Omega)$. 
	Then $u$ has a trivial extension in
$ L^2_{(p,q)}(\C^n)$ (by $0$ outside $\Omega$).\\
The gap $2n+1$ of regularity for the solution $u$ does not depend on $\Omega$.
In the basic example $\Omega=B(0,R)\setminus H$ in which $H$ is a complex analytic hypersurface of the ball $B(0,R)$ of center 0 and radius $R$,
 the result does not depend at all on the complexity of the singularities of $H$ (and on the degree of $H$ when $H$ is algebraic). 
The gap $2n+1$ is an automatic consequence of the method of proof.
To improve the gap $2n+1$ does not seem to immediately have a major interest for the purpose in   [Scha2017].\\
We need four steps to prove theorem \ref{partialtemperedchomology}. 
At first, as P. Dolbeault in [Dol1956], by solving an appropriate Laplacian equation 
$\frac{1}{2}\Delta v=\bar\partial^{\star}f$ on $\C^n$ 
($\Delta$ is the usual Laplacian on $\C^n$ defined on differential forms and currents 
and $\bar\partial^{\star}$ is the operator adjoint of $\bar\partial$ for the  usual Hermitian structure on $\C^n$) 
and replacing $f$ by $f-\bar \partial v$, we reduce the problem to the case of a current $f$
 which has harmonic coefficients on $\Omega$. 
As $f$ is temperate, 
the mean value properties of harmonic functions imply that $f$ grows at the boundary of 
$\Omega$ like a negative power of the distance $d(z,\partial\Omega)$ to the boundary of $\Omega$ (for $z\in\Omega$).
 Then  H\"ormander's $L^2$ estimates for $\bar\partial$ give a solution $u$ of $\bar\partial u=f$ 
 such that $\int_{\Omega}\vert u\vert^2\lbrack d(z,\partial\Omega)\rbrack^{2l} d\lambda(z)<+\infty$ (for some $l>0$). 
Finally using an extension theorem of L. Schwartz [Schw1950]
 for distributions, 
$u$ can be extended as a current on $\C^n$.\\
 Similar methods, were already used by P. Lelong [Le1964] for the Lelong-Poincar\'e $\partial\bar\partial$-equation and
 by H. Skoda [Sk1971] for the $\bar\partial$-equation to obtain solutions explicitly given on $\C^n$
 by integral representations and with precise polynomial estimates.
Y.T. Siu  has already studied holomorhic functions of polynomial growth on bounded open domain of $\C^n$
 using H\"ormander's $L^2$ estimates for $\bar\partial$ in [Siu1970].\\
We establish the preliminary results we need in Section \ref{SectionPreliminarydefinitionsandresultsnNr} 
 and we prove theorem \ref{partialtemperedchomology} in Section \ref{Proofoftheorem 1}.
 We extend the results to a Stein manifold 
in section \ref{Proofoftheorem2} theorem \ref{partialtemperedchomologyStein2} 
(using J-P. Demailly's theorem \ref{beanopenpseudoconvex3SteinDem12} extending H\"ormander's results to manifolds).\\
In the case of a subanalytic bounded open Stein subset $\Omega$ in a Stein manifold $X$, Pierre Schapira in [Scha2020] 
gives independently a proof of Theorem 1 (\emph{i.e} of corollary \ref{partialtemperedchomology2}) and  of theorem \ref{partialtemperedchomologyStein2}. 
His proof is basically founded on cohomological methods which are particularly well adapted to the subanalytic case. It also heavily depends on H\"ormander's $L^2$ estimates for $\bar\partial$ that he uses in the case of a bounded Stein open subset of $\C^n$ after embedding the given Stein manifold in some space $\C^n$. He also uses Lojasiewcz inequalities and another H\"ormander's inequality for subanalytic subsets.\\
A first version of this article only treating the case of $\C^n$ was submitted to ArXiv [Sk2020] on march 2020.\\
I thank Pierre Schapira very much for raising his insightful question which has strongly motivated this research. 

\section{Preliminary definitions and  results}
\label{SectionPreliminarydefinitionsandresultsnNr}

Before proving theorem \ref{partialtemperedchomology}, we need to remind several classical results. We have  sometimes given  direct  proof to establish the results in the appropriate form we wish.\\
An open subset of $\C^n$ is called Stein if it is holomorphically convex:
for all compact $K$ in $\Omega$ the holomorphic hull $\hat K_{\Omega}$ of $K$ is compact 
($x\in\hat K_{\Omega}$ if and only if $x\in \Omega$ and
 for all holomorphic function $f$ on $\Omega$, $\vert f(x)\vert\le \max_{\xi\in K}\vert f(\xi)\vert$).\\
Let us  recall the following fundamental H\" ormander's $L^2$ existence theorem for $\bar\partial$ [H\"or1966] or [H\"or1965] .
 We can also use J.P. Demailly's book [Dem2012], Chapter VIII, paragraph 6, Theorem 6.9, p. 379.
We denote by $L^2_{(p,q)}(\Omega,loc)$ the vector space of current of bidegree $(p,q)$ in $\Omega$ the coefficients of which are in $ L^2(\Omega,loc)$ for the usual Lebesgue measure $d\lambda$ on $\C^n$. 

\begin{theorem}\label{beanopenpseudoconvex}
Let $\Omega$ be an open pseudoconvex subset of $\C^n$ and $\phi$ a plurisubharmonic function defined on $\Omega$.
 For every $g\in L^2_{(p,q+1)}(\Omega,loc )$ with $\bar\partial g=0$
 such that: $\int_{\Omega}\vert g\vert^2 e^{-\phi} d\lambda<+\infty$, there exists $u\in L^2_{(p,q)}(\Omega,loc )$ such that:
\begin{equation}\label{beanopenpseudoconvex1}
 \bar\partial u=g
\end{equation}
in $\Omega$ and:
 \begin{equation}\label{beanopenpseudoconvex2}
\int_{\Omega}\vert u\vert^2 e^{-\phi}(1+\vert z\vert^2)^{-2} d\lambda\le\frac{1}{2}
\int_{\Omega}\vert g\vert^2 e^{-\phi} d\lambda.
 \end{equation}
\end{theorem}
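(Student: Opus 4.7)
The plan is to reduce the statement to the two-weight version of Hörmander's $L^2$ existence theorem, by choosing a single plurisubharmonic weight that absorbs the factor $(1+|z|^2)^{-2}$ on the $u$-side. The starting observation is that $\rho(z) := \log(1+|z|^2)$ is the Fubini-Study potential on $\C^n$; a direct computation gives
\begin{equation*}
i\partial\bar\partial\rho = \frac{(1+|z|^2)\sum dz_j\wedge d\bar z_j-\sum \bar z_j z_k\, dz_j\wedge d\bar z_k}{(1+|z|^2)^2},
\end{equation*}
which is $>0$, and whose smallest eigenvalue (with respect to the standard Hermitian metric on $\C^n$) is exactly $(1+|z|^2)^{-2}$. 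Hence $\rho$ is smooth and strictly plurisubharmonic.

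Set $\psi := \phi + 2\rho$. Since $\phi$ is plurisubharmonic and $\rho$ is strictly plurisubharmonic, $\psi$ is plurisubharmonic and
\begin{equation*}
i\partial\bar\partial\psi \;\ge\; 2\,i\partial\bar\partial\rho \;\ge\; \frac{2}{(1+|z|^2)^2}\,\omega,
\end{equation*}
where $\omega = i\sum dz_j\wedge d\bar z_j$. So $\psi$ is a plurisubharmonic weight satisfying a quantitative positivity with $\eta(z) = 2(1+|z|^2)^{-2}$. The next step is to invoke the classical two-weight $L^2$-existence theorem of Hörmander (cf.\ \emph{loc.~cit.}\ or Demailly, Chap.~VIII \S 6): on a pseudoconvex $\Omega$, for a smooth plurisubharmonic weight $\psi$ with $i\partial\bar\partial\psi \ge \eta\,\omega$ and any $\bar\partial$-closed $g$ with $\int_\Omega \eta^{-1}|g|^2 e^{-\psi} d\lambda < +\infty$, there exists $u$ with $\bar\partial u = g$ and $\int_\Omega |u|^2 e^{-\psi} d\lambda \le \int_\Omega \eta^{-1}|g|^2 e^{-\psi}d\lambda$. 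Substituting $\psi=\phi+2\rho$ and $\eta=2(1+|z|^2)^{-2}$ gives exactly the asserted inequality
\begin{equation*}
\int_\Omega |u|^2 e^{-\phi}(1+|z|^2)^{-2}\,d\lambda \;\le\; \tfrac12\int_\Omega |g|^2 e^{-\phi}\,d\lambda.
\end{equation*}

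The heart of the two-weight theorem is the Bochner-Kodaira-Nakano a priori estimate: for $\alpha\in D_{(p,q+1)}(\Omega)$ and a $C^2$ plurisubharmonic $\psi$,
\begin{equation*}
\|\bar\partial^{\ast}_\psi\alpha\|_\psi^2 + \|\bar\partial\alpha\|_\psi^2 \;\ge\; \int_\Omega \langle [i\partial\bar\partial\psi,\Lambda]\alpha,\alpha\rangle\, e^{-\psi}\,d\lambda,
\end{equation*}
which, under the curvature assumption on $\psi$, implies $\|\bar\partial^{\ast}_\psi\alpha\|_\psi^2+\|\bar\partial\alpha\|_\psi^2 \ge \int \eta\,|\alpha|^2 e^{-\psi}d\lambda$. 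Combined with Cauchy-Schwarz this gives $|\langle\alpha,g\rangle_\psi|^2 \le \bigl(\int \eta^{-1}|g|^2 e^{-\psi}\bigr)\bigl(\|\bar\partial^{\ast}_\psi\alpha\|_\psi^2 + \|\bar\partial\alpha\|_\psi^2\bigr)$, after using $\bar\partial g=0$ to restrict to $\alpha\in\ker\bar\partial$. Hahn-Banach then produces $u\in L^2_{(p,q)}(\Omega,e^{-\psi})$ with $\bar\partial u = g$ in the distribution sense and with the required norm bound.

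The main technical obstacle is the regularity of $\phi$ and of $\Omega$: the Bochner-Kodaira identity requires $\psi\in C^2$, and the a priori estimate above really requires a complete Kähler metric on $\Omega$ (or that $\Omega$ be exhausted by relatively compact smoothly bounded strictly pseudoconvex sublevel sets of a plurisubharmonic exhaustion). I would handle this in the standard way, approximating $\phi$ by a decreasing sequence of smooth plurisubharmonic weights obtained by convolution with a radial mollifier on slightly shrunken subdomains, exhausting $\Omega$ by $\Omega_\nu:=\{\chi<\nu\}$ for a smooth plurisubharmonic exhaustion $\chi$, solving the problem on each $\Omega_\nu$ with the smoothed weight, and then extracting a weakly convergent subsequence in the (locally) reflexive weighted $L^2$ space. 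The uniform bound from the key inequality is preserved in the limit by lower semicontinuity of the $L^2$-norm under weak convergence, and monotone convergence gives back the sharp constant $1/2$.
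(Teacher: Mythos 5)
The paper does not actually prove this statement --- it recalls it as H\"ormander's classical $L^2$ existence theorem, citing [H\"or1965] (Theorem 2.2.1') and [Dem2012] (Chapter VIII, Theorem 6.9) --- and your argument is precisely the standard proof from those references: the auxiliary weight $\psi=\phi+2\log(1+\vert z\vert^2)$, whose complex Hessian has smallest eigenvalue $(1+\vert z\vert^2)^{-2}$ as you correctly compute, followed by the Bochner--Kodaira--Nakano a priori estimate, Cauchy--Schwarz and Hahn--Banach duality, and the usual regularization/exhaustion to handle non-smooth $\phi$ and non-complete $\Omega$. The substitution $\eta=2(1+\vert z\vert^2)^{-2}$ yields exactly the constant $\tfrac12$ of the statement, so the proposal is correct and consistent with the paper's (cited) approach.
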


If $\Omega $ is bounded, $u$ verifies the $L^2$ estimate:
\begin{equation}\label{beanopenpseudoconvex3dfl}
 \int_{\Omega}\vert u\vert^2 e^{-\phi}d\lambda\le C(\Omega) 
\int_{\Omega}\vert g\vert^2 e^{-\phi} d\lambda
\end{equation}
 with $ C(\Omega):=\frac{1}{2}(1+\max_{z\in\Omega}\vert z\vert^2)^{2}.$\\
The classical Oka-Norguet-Bremerman theorem ([H\"or1966] paragraph 2.6 and theorem 4.2.8) claims that the following assertions are equivalent:\\
 1) $\Omega$ is Stein,\\ 
2) $\Omega$ is pseudoconvex: \emph{i.e} there exists a plurisubharmonic function $\phi$ on $\Omega$
 which is exhaustive (for all $c\in\R$ the subset $\lbrace z\in\Omega\vert\;\phi(z)<c\rbrace$ is relatively compact in $\Omega$),\\
3) the function $-\log d(z,\partial\Omega)$ is plurisubharmonic in $\Omega$.\\
Therefore for a given $k\ge0$, we can choose $\phi(z)=-k\log d(z,\partial\Omega)$ in the inequality (\ref{beanopenpseudoconvex3dfl})
 and we will only need to use the following special case of theorem \ref{beanopenpseudoconvex} (see also [H\"or1965] theorem 2.2.1').

\begin{corollary}\label{beanopenpseudoconvex3}
Let $\Omega$ be a bounded Stein open subset of $\C^n$ and $k\ge 0$ be a given real number.
 Then for every $g\in L^2_{(p,q+1)}(\Omega,loc )$ with $\bar\partial g=0$
 such that: $\int_{\Omega}\vert g\vert^2 \lbrack d(z,\partial\Omega)\rbrack^k  d\lambda<+\infty$, there exists $u\in L^2_{(p,q)}(\Omega,loc )$ such that:
\begin{equation}\label{beanopenpseudoconvex4}
 \bar\partial u=g
\end{equation}
in $\Omega$ and:
 \begin{equation}\label{beanopenpseudoconvex5}
\int_{\Omega}\vert u\vert^2  \lbrack d(z,\partial\Omega)\rbrack^k  d\lambda\le C(\Omega)
\int_{\Omega}\vert g\vert^2 \lbrack d(z,\partial\Omega)\rbrack^k d\lambda
 \end{equation}
\end{corollary}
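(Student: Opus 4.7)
The plan is to derive this corollary directly from Theorem \ref{beanopenpseudoconvex} by choosing the weight $\phi$ so that $e^{-\phi(z)}$ reproduces the factor $[d(z,\partial\Omega)]^k$ appearing in the hypothesis.

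First, I would invoke the Stein hypothesis through characterization (3) of the Oka--Norguet--Bremerman equivalences recalled just above the statement: since $\Omega$ is Stein, the function $-\log d(z,\partial\Omega)$ is plurisubharmonic on $\Omega$. For the fixed $k\ge 0$, set $\phi(z):=-k\log d(z,\partial\Omega)$; nonnegative scalar multiples of plurisubharmonic functions remain plurisubharmonic, so $\phi$ is plurisubharmonic on $\Omega$, and by construction $e^{-\phi(z)}=[d(z,\partial\Omega)]^k$.

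Second, I would rewrite the integrability assumption as $\int_\Omega |g|^2 e^{-\phi}\,d\lambda<+\infty$, which is exactly the hypothesis of Theorem \ref{beanopenpseudoconvex} with the weight $\phi$ just chosen. That theorem produces $u\in L^2_{(p,q)}(\Omega,loc)$ with $\bar\partial u=g$ on $\Omega$ and the weighted estimate (\ref{beanopenpseudoconvex2}). To convert this into (\ref{beanopenpseudoconvex5}), I would use that $\Omega$ is bounded: on $\Omega$ the factor $(1+|z|^2)^{-2}$ is bounded below by $(1+\max_{z\in\Omega}|z|^2)^{-2}$, so absorbing that term into a constant gives the bounded variant (\ref{beanopenpseudoconvex3dfl}) with $C(\Omega)=\tfrac12(1+\max_{z\in\Omega}|z|^2)^2$. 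Substituting back $e^{-\phi}=[d(z,\partial\Omega)]^k$ in the bounded estimate yields (\ref{beanopenpseudoconvex5}) verbatim.

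There is no real obstacle: the corollary is essentially a specialization of Hörmander's theorem to one specific plurisubharmonic weight. The only two substantive points are conceptual rather than technical: the Stein hypothesis is precisely what makes $-\log d(z,\partial\Omega)$ available as a plurisubharmonic function (so that $[d(z,\partial\Omega)]^k$ is an admissible Hörmander weight), and the boundedness of $\Omega$ is what allows the nuisance factor $(1+|z|^2)^{-2}$ to be replaced by the constant $C(\Omega)$ on the left-hand side.
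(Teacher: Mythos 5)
Your proposal is correct and follows exactly the route the paper takes: it specializes Theorem \ref{beanopenpseudoconvex} to the plurisubharmonic weight $\phi(z)=-k\log d(z,\partial\Omega)$ (available by the Oka--Norguet--Bremerman characterization of Stein open sets) and absorbs the factor $(1+\vert z\vert^2)^{-2}$ into the constant $C(\Omega)$ using the boundedness of $\Omega$. No gaps.
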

If we denote by $L^{2,k}_{(p,q)}(\Omega)$ the space of $u\in L^2_{(p,q)}(\Omega,loc )$ 
such that\\
 $\int_{\Omega}\vert u\vert^2  \lbrack d(z,\partial\Omega)\rbrack^{2k}  d\lambda<+\infty$, by:
\begin{equation}\label{beanopenpseudoconvex6anotheL2}
L^{2,k}_{0,(p,q)}(\Omega):=\lbrace u\in L^{2,k}_{(p,q)}(\Omega)\vert\;\bar\partial u\in L^{2,k}_{(p,q+1)}(\Omega)\rbrace
\end{equation}
and by $\mathcal O^{2,k}_{(p,0)}(\Omega):=\lbrace u\in L^{2,k}_{(p,0)}(\Omega)\vert\;\bar\partial u=0\rbrace$,
 corollary \ref{beanopenpseudoconvex3} means that the following Dolbeault-complex is exact:

\begin{equation}\label{beanopenpseudoconvex6}
\begin{aligned}
0\rightarrow\mathcal O^{2,k}_{(p,0)}(\Omega)\rightarrow L^{2,k}_{0,(p,0)}(\Omega)\xrightarrow{\bar\partial}
 L^{2,k}_{0,(p,1)}(\Omega)\xrightarrow{\bar\partial}
\ldots\xrightarrow{\bar\partial}  L^{2,k}_{0,(p,q)}(\Omega)\xrightarrow{\bar\partial}
 L^{2,k}_{0,(p,q+1)}(\Omega)\xrightarrow{\bar\partial}
\ldots\\
\xrightarrow{\bar\partial} L^{2,k}_{0,(p,n)}(\Omega)
\rightarrow 0.
\end{aligned}
\end{equation}

 We also need two results of real analysis.
\begin{lemma}\label{LetwbeasdistributiononRoforderk}
Let $w$ be a distribution on $\R^n$ of order $k$ which is harmonic 
(for the usual Laplacian on $\R^n$) on the bounded open subset $\Omega$ of $\R^n$. 
Then $w$ is of polynomial growth on $\Omega$:
$\vert w(z)\vert\le C(\Omega,w)\; \lbrack\; d(z,\R^n \setminus \Omega)\rbrack^{-k-n}$
 where the constant $C(\Omega,w)$ only depends on $\Omega$ and $w$.\\
If $w\in H^{-s}(\R^n)$ for $s\ge 0$ we have: $\vert w(z)\vert\le C(\Omega,w)\; \lbrack\; d(z,\R^n \setminus \Omega)\rbrack^{-k-\frac{n}{2}}$ 
where $k$ is the integer such that $s\le k<s+1$.

\end{lemma}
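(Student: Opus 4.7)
The plan is to represent $w$ pointwise by a distributional pairing against a radial smooth bump, and then to insert this representation into, respectively, the continuity estimate furnished by the order-$k$ hypothesis and the Sobolev duality $H^{-s}$--$H^{s}$. The optimal radius of the bump will be proportional to $d(z,\R^n\setminus\Omega)$, and this is what generates the boundary-distance factor in the conclusion.

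Concretely, I would fix a non-negative radial $\phi\in {D}(B(0,1))$ with $\int\phi\,d\lambda=1$ and set $\phi_r(x):=r^{-n}\phi(x/r)$. Since $\Delta w=0$ on $\Omega$ as distributions, by hypoellipticity of the Laplacian $w$ agrees on $\Omega$ with a $C^{\infty}$ harmonic function, and the spherical mean value property delivers the key identity
$$w(z)\;=\;\int w(y)\,\phi_r(y-z)\,d\lambda(y)\;=\;\langle w,\phi_r(\cdot-z)\rangle,$$
valid for every $z\in\Omega$ and every $0<r<d(z,\R^n\setminus\Omega)$. All subsequent work amounts to bounding the right-hand side by choosing $r$ optimally.

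For the first assertion, I would apply the definition of order $k$ on the relatively compact open set $\Omega$ itself:
$$|\langle w,\psi\rangle|\;\le\;C(\Omega,w)\,\sup_{x\in\Omega}\sum_{|\alpha|\le k}|D^{\alpha}\psi(x)|,\qquad \psi\in {D}(\Omega).$$
With $\psi=\phi_r(\cdot-z)$, the elementary bound $\|D^{\alpha}\phi_r\|_{\infty}\le r^{-n-|\alpha|}\|D^{\alpha}\phi\|_{\infty}$ and the fact that the $|\alpha|=k$ term dominates for $r\le 1$ yield $|w(z)|\le C'\,r^{-n-k}$; the choice $r=\tfrac{1}{2}d(z,\R^n\setminus\Omega)$ then produces the desired estimate in the regime where $d(z,\R^n\setminus\Omega)$ is small (away from the boundary, the bound is trivial since $\overline{\Omega}$ is compact and $w$ is continuous on $\Omega$).

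For the second assertion, I would replace the order-$k$ estimate by Hilbert-space duality $|\langle w,\psi\rangle|\le\|w\|_{H^{-s}(\R^n)}\|\psi\|_{H^{s}(\R^n)}$, and exploit the scaling $\widehat{\phi_r}(\xi)=\widehat{\phi}(r\xi)$ together with translation invariance of $H^{s}$ to obtain $\|\phi_r(\cdot-z)\|_{H^{s}(\R^n)}\le C\,r^{-n/2-s}$ for $r\le 1$. This gives $|w(z)|\le C''[d(z,\R^n\setminus\Omega)]^{-n/2-s}$, which is in fact slightly stronger than the statement, since the integer $k$ with $s\le k<s+1$ satisfies $k\ge s$. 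I do not anticipate any genuine obstacle: once the mean-value identity $w(z)=\langle w,\phi_r(\cdot-z)\rangle$ is in place, the rest is a routine seminorm computation on a scaled bump, and the uniformity of the constant $C(\Omega,w)$ over $z\in\Omega$ is automatic because $\phi_r(\cdot-z)$ always lies in ${D}(\Omega)$ for the prescribed range of $r$.
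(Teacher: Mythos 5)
Your proposal is correct and follows essentially the same route as the paper: the mean-value property of harmonic functions gives $w(z)=\langle w,\rho_r(\cdot-z)\rangle$ for $r<d(z,\R^n\setminus\Omega)$, and the conclusion follows by feeding the scaled bump into the order-$k$ continuity estimate, respectively the $H^{-s}$--$H^{s}$ duality. The only (harmless) difference is in the Sobolev case, where you estimate $\Vert\rho_r\Vert_{H^{s}}\le C\,r^{-s-n/2}$ directly by Fourier scaling and thus obtain the marginally sharper exponent $-s-\frac{n}{2}$, whereas the paper first majorizes the $H^{s}$ norm by the $H^{k}$ norm with $k$ the integer satisfying $s\le k<s+1$ and counts derivatives in $L^2$.
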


\begin{proof}
Let $\rho$ be a non negative regularizing function in $\mathcal D (\R^n)$ 
which  only depends on $\vert \zeta\vert$,
  has its support in the Euclidean ball of radius $1$
 and verifies: $\int_{\R^n} \rho(\zeta) d\lambda(\zeta)=1$
where $d\lambda$ is the Lebesgue measure on $\R^n$.\\
Let $\rho_{\epsilon}(\zeta):=\frac{1}{\epsilon^{n}}\rho(\frac{\zeta}{\epsilon})$
 be the associatef family of regularizing functions in $\mathcal{D}(\R^n)$
so that $\rho_{\epsilon}$ has its support in the ball of radius $\epsilon$ 
and verifies too $\int_{\R^n} \rho_{\epsilon}(\zeta) d\lambda(\zeta)=1$.\\
As $w$ is harmonic in $\Omega$,  
for every $z\in\Omega$,  $w(z)$ coincide with its mean-value on every Euclidean sphere of center $z$ and radius $r< d(z,\partial\Omega)$. 
Therefore using Fubini's theorem we get for every $\epsilon< d(z,\partial\Omega)$:
\begin{equation}\label{Thereforeusingfubinistheorem}
  w(z)=\int_{\R^n} w(z+\zeta) \rho_{\epsilon}(\zeta) d\lambda(\zeta)
	=\int_{\R^n} w(\zeta) \rho_{\epsilon}(z-\zeta) d\lambda(\zeta).
\end{equation} 

\emph{i.e.} $w=w\star\rho_{\epsilon}$ on $\Omega_{\epsilon}:=\lbrace z\vert\, d(z,\partial\Omega)<\epsilon\rbrace$ (in which $\star$  represents a convolution product) .\\ 
Testing  $w$ as a distribution on the test function (in the variable $\zeta$ ): $\rho_{\epsilon}(z-\zeta)$ with $\epsilon< d(z,\partial\Omega)\le 1$,
 equation (\ref{Thereforeusingfubinistheorem}) becomes:
\begin{equation}\label{w(z)=w(zeta),rhoepsilon}
 w(z)=<w(\zeta),\rho_{\epsilon}(z-\zeta)>_{\zeta}.
\end{equation} 
 As $w$ is a distribution of order $k$, 
we have for every function $\phi\in\mathcal D (\R^n)$ an inequality:
\begin{equation}\label{}
 \vert< w,\phi>\vert\le C_1(w)\;\sup_{\zeta\in\,\C^n} 
\Sigma_{\vert\alpha\vert\le k}\; \vert D_{\zeta}^{\alpha}\phi(\zeta)\vert,
\end{equation}
in which $C_1(w)>0$ is a constant only depending on $w$.\\
Taking $\phi(\zeta)=\rho_{\epsilon}(z-\zeta)$, we get:
\begin{equation}\label{}
 \vert w(z)\vert\le C_1(w)\;\sup_{\vert\zeta\vert\le \epsilon} 
\Sigma_{\vert\alpha\vert\le k}\; \vert D_{\zeta}^{\alpha}\rho_{\epsilon}(z-\zeta)\vert,
\end{equation} 
and:
\begin{equation}\label{}
 \vert w(z)\vert\le C_2(w) \epsilon^{-n-k}\; ,
\end{equation} 
for some constant $C_2(w)>0$.\\
As it is true for every $\epsilon< d(z,\partial\Omega)$, 
we take the limit as $\epsilon\rightarrow d(z,\partial\Omega)$ and we get;
\begin{equation}\label{}
\vert w(z)\vert\le C_2(w)\;\lbrack d(z,\partial\Omega)\rbrack^{-l}
\end{equation} 
  with $l=n+k$ and then:

\begin{equation}
\int_{\Omega} \vert w\vert^2\;\lbrack d(z,\partial\Omega)\rbrack^{2l} d\lambda <\infty.
\end{equation}
If we now assume that $w\in H^{-s}(\R^n)$ for a given $s>0$, equation (\ref{w(z)=w(zeta),rhoepsilon}) becomes:

\begin{equation}\label{w(z)=w(zeta),rhoepsilon1}
 \vert w(z)\vert=\vert<w(\zeta),\rho_{\epsilon}(z-\zeta)>_{\zeta}\vert\le \vert\vert w\vert\vert_{H^{-s}(\R^n)}\vert\vert\rho_{\epsilon}(z-\zeta)\vert\vert_{H^s(\R^n)}.
\end{equation} 
Let $k$ be the integer defined by $s\le k< s+1$ so that (denoting as usual by $\hat\phi$ the Fourier transform of $\phi$):

 \begin{equation}\label{w(z)=w(zeta),rhoepsilon2}
 \vert\vert \phi\vert\vert^2_{H^s(\R^n)}=
\int_{\R^n}(1+\vert\xi\vert^2)^s \vert \hat\phi(\xi)\vert^2 d\lambda (\xi)
\le\int_{\R^n}(1+\vert\xi\vert^2)^k \vert \hat\phi(\xi)\vert^2 d\lambda (\xi)=\vert\vert \phi\vert\vert^2_{H^k(\R^n)}
\end{equation} 

As $k$ is an integer, the norm $\vert\vert \phi\vert\vert_{H^k(\R^n)}$ is equivalent to the sum of the $L^2$ norms of the derivatives of $\phi$ of order less or equal to $k$, we have:
\begin{equation}\label{w(z)=w(zeta),rhoepsilon3}
 \vert\vert \phi\vert\vert^2_{H^k(\R^n)}\le C_2(k)
\int_{\R^n}\sum_{\vert\alpha\vert\le k}\vert D^{\alpha}\phi\vert^2 d\lambda 
\end{equation}
We replace $\phi$ by $\phi_{\epsilon}(\zeta):=\frac{1}{\epsilon^n}\phi(\frac{\zeta}{\epsilon})$\, (with$ \epsilon\le 1$) so that we get:

\begin{equation}\label{w(z)=w(zeta),rhoepsilon4}
 \vert\vert \phi_{\epsilon}\vert\vert^2_{H^k(\R^n)}\le C_2(k) \epsilon^{-2k-n}
\lbrack\int_{\R^n}\sum_{\vert\alpha\vert\le k}\vert D^{\alpha}\phi\vert^2 d\lambda\rbrack 
\end{equation}
Using (\ref{w(z)=w(zeta),rhoepsilon2}), (\ref{w(z)=w(zeta),rhoepsilon3}) and (\ref{w(z)=w(zeta),rhoepsilon4}) with $\phi(\zeta)=\rho(z-\zeta)$ 
(for a fixed $z\in\Omega$ with $\epsilon<d(z,\partial\Omega)\le1$) we finally obtain:

\begin{equation}\label{w(z)=w(zeta),rhoepsilon5}
 \vert w(z)\vert\le C_3(k,n)\;\vert\vert w\vert\vert_{H^{-s}(\R^n)}\;\epsilon^{-k-\frac{n}{2}}
\end{equation}
and when $\epsilon\rightarrow d(z,\partial\Omega)$:
\begin{equation}\label{w(z)=w(zeta),rhoepsilon6}
 \vert w(z)\vert\le C_3(k,n)\;\vert\vert w\vert\vert_{H^{-s}(\R^n)} \;\lbrack d(z,\partial\Omega)\rbrack^{-k-\frac{n}{2}}
\end{equation}

\begin{equation}\label{w(z)=w(zeta),rhoepsilon7}
\int_{\Omega} \vert w(z)\vert^2\;\lbrack d(z,\partial\Omega)\rbrack^{2k+n} d\lambda(z) <+\infty.
\end{equation}

\end{proof}
\begin{remark}
Instead of using mean properties of harmonic functions, one can also  use the elementary solution of $\Delta$ in $\R^n$ as in [KS1996] proposition 10.1., p. 53.
\end{remark}
 We also need the following theorem of L. Schwartz (in his book on distribution theory [Schw1950]). We can also  directly use theory of Sobolev spaces.
We say that a measure $\mu$ defined  on an open bounded subset $\Omega$ of $\R^n$, is  of polynomial growth at most $l$ in $\Omega$, if $\int_{\Omega}\; d(z,\partial\Omega)^l d\vert\mu\vert(z)<+\infty$.

\begin{theorem}\label{w(z)=w(zeta),rhoepsilon9on}
 A measure of polynomial growth $l$ defined  on an open bounded subset $\Omega$ of $\R^n$ can be extended as a distribution on $\R^n$ of order at most $l$.\\
Moreover if $w\in L^2(\Omega,loc)$ verifies the estimate:
$\int_{\Omega} \vert w(z)\vert^2\;\lbrack d(z,\partial\Omega)\rbrack^{2l} d\lambda(z) <+\infty$, with $l\in\N$, then for every $r>l$, $w$ can be extended as a distribution  in $H^{-r-\frac{n}{2}}(\R^n)$ (particularly in $H^{-l-\frac{n}{2}-1}(\R^n))$ .
\end{theorem}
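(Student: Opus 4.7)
The key step is the first assertion (extension of a measure of polynomial growth $l$ as a distribution of order $l$); the Sobolev-space statement then follows from it by a short Cauchy-Schwarz and Sobolev-embedding argument. For the first assertion, I would argue by Hahn-Banach based on the following pointwise Taylor bound: any $\phi\in C_c^l(\Omega)$ vanishes identically in a neighborhood of $\partial\Omega$ (its support is at a positive distance from $\partial\Omega$), so at every boundary point $y\in\partial\Omega$ all derivatives $D^\alpha\phi(y)$ with $|\alpha|\le l-1$ vanish. For any $z\in\Omega$, picking $y\in\partial\Omega$ with $|z-y|=d(z,\partial\Omega)$ and applying Taylor's integral remainder at $y$, one obtains
\begin{equation*}
|\phi(z)|\le C_{l,n}\,\|\phi\|_{C^l(\R^n)}\,d(z,\partial\Omega)^l.
\end{equation*}
Integrating against $d|\mu|$ and using the polynomial-growth hypothesis $M:=\int_\Omega d(z,\partial\Omega)^l\,d|\mu|<+\infty$ yields
\begin{equation*}
\Bigl|\int_\Omega\phi\,d\mu\Bigr|\le C_{l,n}\,M\,\|\phi\|_{C^l(\R^n)}.
\end{equation*}
Thus the linear form $L:\phi\mapsto\int_\Omega\phi\,d\mu$, defined on the subspace $C_c^l(\Omega)\subset C_c^l(\R^n)$, is bounded for the global $C^l$ sup-norm. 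Hahn-Banach extends $L$ to a continuous linear form $\tilde L$ on $C_c^l(\R^n)$ satisfying the same bound; this is a distribution on $\R^n$ of order at most $l$ that restricts to $\mu$ on $\mathcal D(\Omega)$.

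For the second assertion, Cauchy-Schwarz shows that the measure $\mu:=w\,d\lambda$ has polynomial growth at most $l$, since
\begin{equation*}
\int_\Omega d(z,\partial\Omega)^l |w|\,d\lambda \le \Bigl(\int_\Omega|w|^2 d(z,\partial\Omega)^{2l}\,d\lambda\Bigr)^{1/2}|\Omega|^{1/2} <+\infty.
\end{equation*}
By the first assertion, $w\,d\lambda$ extends to a distribution $\tilde w$ on $\R^n$ with $|\langle\tilde w,\phi\rangle|\le C\|\phi\|_{C^l(\R^n)}$ for every $\phi\in C_c^l(\R^n)$. For any real $r>l$, the Sobolev embedding $H^{r+n/2}(\R^n)\hookrightarrow C_b^l(\R^n)$ gives $\|\phi\|_{C^l(\R^n)}\le C_r\,\|\phi\|_{H^{r+n/2}(\R^n)}$, and by duality $\tilde w\in H^{-r-n/2}(\R^n)$. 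The parenthetical statement is the special case $r=l+1$.

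\textbf{Main obstacle.} The essential technical fact is the uniform Taylor bound for $\phi\in C_c^l(\Omega)$: the constant $C_{l,n}$ depends only on $l$ and $n$, not on the support of $\phi$ or on the geometry of $\partial\Omega$. This works precisely because any such $\phi$ vanishes to infinite order on $\partial\Omega$, making the potentially wild regularity of the boundary completely irrelevant. The Hahn-Banach step renders the extension non-canonical, but since the theorem only asserts existence this is sufficient. A constructive alternative, closer to Schwartz's original arguments in [Schw1950], would build the extension via a Whitney partition of unity subordinate to the dyadic decomposition $\Omega_j=\{2^{-j-1}<d(z,\partial\Omega)\le 2^{-j}\}$ and glue local contributions layer by layer; this is technically heavier but avoids the axiom of choice.
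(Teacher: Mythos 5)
Your proposal is correct and follows essentially the same route as the paper's proof: the Taylor bound $\vert\phi(z)\vert\le C\,\lbrack d(z,\partial\Omega)\rbrack^{l}\,\sup\Sigma_{\vert\alpha\vert=l}\vert D^{\alpha}\phi\vert$ for test functions flat on $\partial\Omega$, Hahn--Banach for the order-$l$ extension, and Cauchy--Schwarz together with the Sobolev embedding $H^{r+\frac{n}{2}}(\R^n)\hookrightarrow C^{l}_{b}(\R^n)$ for the $L^2$ statement. The only (harmless) variant is that you deduce the $H^{-r-\frac{n}{2}}$ membership by applying the Sobolev embedding to the already-constructed order-$l$ extension, whereas the paper runs the duality/Hahn--Banach argument a second time directly in the Hilbert space $H^{r+\frac{n}{2}}$; your version has the minor advantage of producing a single extension lying in $H^{-r-\frac{n}{2}}(\R^n)$ for every $r>l$ simultaneously, which the paper's Remark \ref{observethattheextensiontildew1} explicitly does not claim.
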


\begin{remark}\label{observethattheextensiontildew1}
If $\int_{\Omega} \vert w(z)\vert^2\;\lbrack d(z,\partial\Omega)\rbrack^{2l} d\lambda(z)<+\infty$,
let us observe that the extension $\tilde w$  of $w$ depends ($a\, priori$) on the choice of $r>l$. 
We will use the results ot theorem \ref{w(z)=w(zeta),rhoepsilon9on} in the case of $\C^n=\R^{2n}$ so that $\tilde w$ can be constructed in $H^{-l-n-1}$.
\end{remark}

\begin{remark}\label{observethattheextensiontildew2}
 If $\int_{\Omega} \vert w(z)\vert^2\;\lbrack d(z,\partial\Omega)\rbrack^{2l} d\lambda(z)<+\infty$,
as  $\Omega$ is bounded,
 Schwarz inequality  implies that
 $\int_{\Omega}\, d(z,\partial\Omega)^l\,\vert w(z)\vert\; d\lambda(z)<+\infty$. 
$w$ defines on $\Omega$ a measure of polynomial growth at most $l$.
  Hence the first part of theorem \ref{w(z)=w(zeta),rhoepsilon9on} implies that $w$ can be extended to $\R^n$ as a distribution of order at most $l$.
\end{remark}

\begin{proof}

In L. Schwartz's book there is no proof and no references so that we give the following proof. We  consider the subspace $F\subset\cal D(\C^n)$ of functions the derivatives of which vanish at the order $\le l-1$ in every point $\zeta\in\partial\Omega$. 
 For a given $z\in\Omega$, we choose a point $\zeta\in\partial\Omega$ such that $\vert z-\zeta\vert=d(z,\partial\Omega)$
 and we apply Taylor's formula at the point $\zeta\in\partial\Omega$, at the order $l-1$ 
(with integral remainder, cf. [H\"or1983] paragraph 1.1, formula (1.1.7'))
 to a  function $\phi\in F$ restricted to the real interval $\lbrace tz+(1-t)\zeta\vert \, t\in\R,\, 0\le t\le 1\rbrace$
 linking in $\Omega$ the point $\zeta\in\partial\Omega$ to $z\in\Omega$, so that we obtain:

\begin{equation}\label{w(z)=w(zeta),rhoepsilon8taylor}
 \phi(z)=l\int_0^1 (1-t)^{l-1}\left\lbrack\,\Sigma_{\vert\alpha\vert= l}\;  D_{}^{\alpha} \phi(\zeta+t(z-\zeta))\,\frac{(z-\zeta)^{\alpha}}{\alpha!}\,\right\rbrack dt,
\end{equation}

and then:
\begin{equation}\label{w(z)=w(zeta),rhoepsilon8}
 \vert \phi(z)\vert\le C_4(l,n)\; \lbrack d(z,\partial\Omega)\rbrack^{l} 
 \;\max_{\xi \in\bar\Omega} \; \lbrack\Sigma_{\vert\alpha\vert= l}\; \vert D_{\xi}^{\alpha} \phi(\xi)\vert\rbrack.
\end{equation}
For all functions $\phi\in F$ and all measures $\mu$ on $\Omega$
 of polynomial growth $l$, \emph{i.e} $\int_{\Omega}\; d(z,\partial\Omega)^l d\vert\mu\vert(z)<+\infty$,
(using (\ref{w(z)=w(zeta),rhoepsilon8})) we have:

\begin{equation}\label{w(z)=w(zeta),rhoepsilon9}
\vert\int_{\Omega} \phi\; d\mu\; \vert
\le C_4(l,n)\;\lbrack\,\int_{\Omega}\; d(z,\partial\Omega)^l d\vert\mu\vert\,\rbrack
 \;\max_{\xi \in\bar\Omega} \;\lbrack \Sigma_{\vert\alpha\vert\le l}\; \vert D_{\xi}^{\alpha} \phi(\xi)\vert\rbrack.
\end{equation}
 
 For a given measure $\mu$ of polynomial growth $l$, we consider the space $\mathcal E^l(\C^n)$  of functions of class $\mathrm C^l$ on $\C^n$.
We apply Hahn Banach theorem to the linear form $\phi\rightarrow\int_{\Omega} \phi\; d\mu$ 
defined on the subspace $F\subset\mathcal D(\C^n)\subset\mathcal E^l(\C^n)$ and continuous for the seminorm 
 $\max_{\xi\in\bar\Omega} \;\lbrack \Sigma_{\vert\alpha\vert\le l}\; \vert D_{\xi}^{\alpha} \phi(\xi)\vert\rbrack$. 
This linear form can be extended in a continuous linear form $T$ on $\mathcal E^l(\C^n)$, such that:
\begin{equation}\label{w(z)=w(zeta),rhoepsilon10}
\vert<T,\phi> \vert
\le C_4(l,n)\;\lbrack\int_{\Omega}\; d(z,\partial\Omega)^l d\vert\mu\vert \rbrack
 \;\max_{\xi \in\bar\Omega} \;\lbrack \Sigma_{\vert\alpha\vert\le l}\; \vert D_{\xi}^{\alpha} \phi(\xi)\vert\rbrack.
\end{equation}
for all $\phi\in\mathcal E^l(\C^n)$,
 i.e. a distribution of order $l$ on $\C^n$ (with compact support).\\
 Let us now assume that $w\in L^2(\Omega, loc)$ verifies the estimate:
\begin{equation}\label{w(z)=w(zeta),rhoepsilon11}
I_l(w):=\int_{\Omega} \vert w(z)\vert^2\;\lbrack d(z,\partial\Omega)\rbrack^{2l} d\lambda(z) <+\infty.
\end{equation}
 for some integer $l\ge 0$.\\
 For every $\phi\in F$, Cauchy-Schwarz inequality gives:
\begin{equation}\label{w(z)=w(zeta),rhoepsilon12}
\vert<w,\phi>\vert^2=\vert\int_{\Omega} w\phi\; d\lambda\:\vert^2\le
I_l(w)\; \int_{\Omega} \vert \phi(z)\vert^2\;\lbrack d(z,\partial\Omega)\rbrack^{-2l} d\lambda(z).
\end{equation}
Using inequality(\ref{w(z)=w(zeta),rhoepsilon8}), (\ref{w(z)=w(zeta),rhoepsilon12}) becomes:
\begin{equation}\label{w(z)=w(zeta),rhoepsilon13}
\vert<w,\phi>\vert^2\le
C_5(l,n,\Omega)\;I_l(w)\;
 \lbrack\max_{\xi\in\bar\Omega}\Sigma_{\vert\alpha\vert= l}\;\vert D_{\xi}^{\alpha} \phi(\xi)\vert\rbrack^2.
\end{equation}
 with $C_5(l,n,\Omega):=\lbrack C_4(l,n)\rbrack^2\,\int_{\Omega}d\lambda$.\\
For every $r>l$, we use classical Sobolev inequality:
\begin{equation}\label{w(z)=w(zeta),rhoepsilon14max}
 \max_{\xi \in\R^n} \; \Sigma_{\vert\alpha\vert\le l}\; \vert D_{\xi}^{\alpha} \phi(\xi)\vert\le C_6(r)\;\vert\vert\phi\vert\vert_{H^{r+\frac{n}{2}}}
\end{equation}
 and inequality (\ref{w(z)=w(zeta),rhoepsilon13}), so that
 we obtain:
\begin{equation}\label{w(z)=w(zeta),rhoepsilon14}
\vert <w,\phi>\vert\le
C_7(l,n,r,\Omega)\;\lbrack I_l(w)\rbrack^{\frac{1}{2}}\;\vert\vert\phi\vert\vert_{H^{r+\frac{n}{2}}}.
\end{equation}\label{w(z)=w(zeta),rhoepsilon15}
with $C_7(l,n,r,\Omega):=C_6(r)\, \lbrack C_5(l,n,\Omega)\rbrack^{\frac{1}{2}}$.\\
Using still Hahn-Banach Theorem for the linear form $\phi\rightarrow<w,\phi>$ defined on the  subspace $F$ of $H^{r+\frac{n}{2}}(\R^n)$ and continuous for the norm of $ H^{r+\frac{n}{2}}$,
we extend $w$ as a distribution $T\in H^{-r-\frac{n}{2}}$ such that: 
$\vert\vert T\vert\vert_{H^{-r-\frac{n}{2}}}\le
C_7(l,n,r,\Omega)\;\lbrack I_l(w)\rbrack^{\frac{1}{2}}$ 
(of course we can also do this extension by using orthogonal projection on the closed subspace $\bar F$ in the Hilbert space $H^{r+\frac{n}{2}}(\R^n)$).
\end{proof}

We can now prove theorem 1.

\section{Proof of theorem 1}
\label{Proofoftheorem 1} 

We follow P. Dolbeault's proof of the Dolbeault-Grothendieck lemma. A. Grothendieck's proof was different, (in some sense) more elementar
 than P. Dolbeault's proof but not useful for our present purpose. 
Of course we can suppose (w.l.o.g.) that $f$ has compact support in $\C^n$ (using a cutoff function in $\mathcal D (\R^n)$ equal to 1 in a neighborhood of $\bar \Omega$).
 Let us remind that $\C^n$ being equipped with its usual flat Hermitian metric, the Laplacian acting on differential forms and currents is  defined on $\C^n$ by:
\begin{equation}\label{beanopenpseudoconvex7delta1}
\frac{1}{2} \Delta=\frac{1}{2} (dd^{\star}+d^{\star}d)=\bar\partial\bar\partial^{\star}+\bar\partial^{\star}\bar\partial
=\partial\partial^{\star}+\partial^{\star}\partial,
\end{equation}
 so that $\frac{1}{2}\Delta f$ is the usual Laplacian on $\C^n$ acting on each coefficient of the current $f$. 
  $\bar\partial^{\star}$  (resp. $\partial^{\star}$) (resp. $d^{\star}$) 
	is the adjoint of $\bar\partial$ (resp. $\partial$) (resp. $d:=\partial+\bar\partial$) for the same constant metric on $\C^n$
 (there is no weight function).\\
At first we solve in $\C^n$ the Laplacian equation:
\begin{equation}\label{beanopenpseudoconvex7}
\frac{1}{2}\Delta v:=(\bar\partial\bar\partial^{\star}+\bar\partial^{\star}\bar\partial)\; v=\bar\partial^{\star}f.
\end{equation} 
 ($v$ and $\bar\partial^{\star}f$ are of bidegree $(p,q)$.)\\
 If we write: $f=\sum_{\vert I\vert=p,\vert J\vert=q+1}^{'} f_{I,J} dz_I\wedge d\bar z_J$ 
 ($\Sigma ^{'}$ means that we only sum on strictly increasing multi-indices $I$ and $J$), we have 
(cf. [H\"or1966] paragraph 4.1, p. 82 or 85 or [Dem2012] Chapter 6, paragraph 6.1) :
\begin{equation}\label{beanopenpseudoconvex8adjoint}
\bar\partial^{\star} f=(-1)^{p-1}\Sigma_{\vert I\vert=p,\vert K\vert=q}^{'} \;
\left\lbrack\Sigma_{j=1}^{j=n} \frac{\partial}{\partial z_j}(f_{I,jK})\right\rbrack \; dz_I\wedge d\bar z_{K}. 
\end{equation}
If $f$ is of bidegree $(0,1)$, we simply have : $f=\Sigma_{j=1}^{j=n}  f_j d \bar z_j$, $\bar\partial^{\star} f=-
\Sigma_{j=1}^{j=n} \frac{\partial f_j}{\partial z_j}$ and (\ref{beanopenpseudoconvex7}) is the Laplace equation in $\C^n$ : 
$\Sigma_{j=1}^{j=n} \frac{\partial^2}{\partial z_j \partial\bar z_j} v=\Sigma_{j=1}^{j=n} \frac{\partial f_j}{\partial z_j}$.\\
$v$ is obtained by convolution of each coefficient $\frac{\partial f_{I,jK}}{\partial z_j}$
of $\bar\partial^{\star} f$ in (\ref{beanopenpseudoconvex8adjoint}) with the elementar solution $E$ of the usual Laplacian in $\C^n$.\\
We set:
\begin{equation}\label{beanopenpseudoconvex8}
g:=f-\bar\partial v
\end{equation}
As $\bar\partial^2=0$, we have (by usual computation):
\begin{equation}\label{beanopenpseudoconvex94p} 
\frac{1}{2}\Delta(\bar\partial v)=(\bar\partial\bar\partial^{\star}+\bar\partial^{\star}\bar\partial) \bar\partial v=\bar\partial\bar\partial^{\star}\bar\partial v=\bar\partial(\bar\partial\bar\partial^{\star}+\bar\partial^{\star}\bar\partial) v
=\bar\partial(\frac{1}{2}\Delta v)
\end{equation}
 \emph{i.e.} $\bar\partial$ commute with $\Delta$ on $\C^n$.
 Using (\ref{beanopenpseudoconvex7}), we have: $\frac{1}{2}\Delta(\bar\partial v)=\bar\partial\bar\partial^{\star}f$, 
  and:
\begin{equation}\label{beanopenpseudoconvex9}
\frac{1}{2}\Delta g=\frac{1}{2}\Delta f-\frac{1}{2}\Delta (\bar\partial v)=(\bar\partial\bar\partial^{\star}+\bar\partial^{\star}\bar\partial)
\; f-\bar\partial\bar\partial^{\star}f
=\bar\partial^{\star}\bar\partial\; f.
\end{equation} 
Hence (as $\bar\partial f=0$ on $\Omega$), $g$ is harmonic on $\Omega$:
\begin{equation}\label{beanopenpseudoconvex11}
\Delta g=0.
\end{equation} $f$ being of order $k$ with compact support,
 $\bar\partial^{\star}f$ is of order $k+1$ with the same support 
(the coefficients of $\bar\partial^{\star}f$ are linear combination
 of derivatives $\frac{\partial}{\partial z_j}$ of the coefficients of $f$). 
Therefore the solution $v$ of the Laplacian equation (\ref{beanopenpseudoconvex7}) is of order at most $k$.
Indeed it is obtained by convolution:  
$E\star\frac{\partial f_{I,jK}}{\partial z_j}=\frac{\partial E}{\partial z_j}\star f_{I,jK}$
 of each coefficient $\frac{\partial f_{I,jK}}{\partial z_j}$
 of 
$\bar\partial^{\star} f$ in(\ref{beanopenpseudoconvex8adjoint})
 with the elementary solution $E:=-C_n \vert z\vert^{-2n+2}$ of $\Delta$,
the first derivatives $\frac{\partial E}{\partial z_j}$ of $E$ are $O(\vert z\vert^{-2n+1})$, then in $L^1 (\C^n,loc)$
and the convolution $\frac{\partial E}{\partial z_j}\star f_{I,jK}$
 of a function in $\L^1(\R^{2n}, loc)$ with a distribution of order $k$ and compact support 
is still of order at most $k$.
Hence $\bar\partial v$ is of order at most $k+1$ and $g=f-\bar\partial v$ is too of order at most $k+1$.\\
 We write: $g=\sum_{\vert I\vert=p,\vert J\vert=q+1} g_{I,J} dz_I\wedge d\bar z_J$  
with strictly increasing multi-indices $I$ and $J$. Let $g_{I,J}$ be a coefficient of $g$.
As $g_{I,J}$ is harmonic in $\Omega$, 
we can apply lemma \ref{LetwbeasdistributiononRoforderk} in $\R^{2n}$ to $g_{I,J}$  which is a distribution of order at  most $k+1$, we get an inequality:

\begin{equation}\label{beanopenpseudoconvex12}
 \vert g_{I,J}(z)\vert\le C_1(\Omega,g_{I,J})\; \lbrack d(z,\partial\Omega)\rbrack^{-2n-k-1}.
\end{equation} 
Hence :
\begin{equation}\label{beanopenpseudoconvex13}
\vert g(z)\vert\le C_2(\Omega,g)\;\lbrack d(z,\partial\Omega)\rbrack^{-l}
\end{equation} 
 for some constant  $C_2(\Omega,g)>0$ and $l:=2n+k+1$ (and $z\in\Omega$) and then:

\begin{equation}\label{beanopenpseudoconvex14}
\int_{\Omega} \vert g\vert^2\;\lbrack d(z,\partial\Omega)\rbrack^{2l} d\lambda <\infty
\end{equation}
 where $d\lambda$ is the Lebesgue measure on $\C^n$.\\

L. H\"ormander's $L^2$ estimates for $\bar\partial$ (corollary \ref{beanopenpseudoconvex3}) provide a solution $u$ in $\Omega$ of the equation:

\begin{equation}\label{beanopenpseudoconvex15}
\bar\partial u=g
\end{equation}
with the $L^2$ estimate:
\begin{equation}\label{beanopenpseudoconvex16}
\int_{\Omega} \vert u\vert^2\;\lbrack d(z,\partial\Omega)\rbrack^{2l} d\lambda <\infty
\end{equation}

 As $\Omega$ is bounded, Cauchy-Schwarz inequality gives the following $L^1$ estimate:
\begin{equation}\label{beanopenpseudoconvex17}
\int_{\Omega} \vert u\vert\;\lbrack d(z,\partial\Omega)\rbrack^{l} d\lambda <\infty
\end{equation}

Therefore a coefficient $u_{I,J}$ of $u$ defines a measure of polynomial growth $l$ on $\Omega$.
 Using L. Schwartz's theorem \ref{w(z)=w(zeta),rhoepsilon9on}, such a measure  (of polynomial growth $l$) defined  on $\Omega$  can be extended as a distribution on $\C^n$ (of order at most $l$) so that $u$ can be extended as a current on $\C^n$ of order at most $l$. Then $u+v$ is a current on $\C^n$ verifying:
\begin{equation}\label{beanopenpseudoconvex18}
\bar\partial( u+v)=f
\end{equation}
on $\Omega$. Moreover $u+v$ has order at most $l=k+2n+1$.\\
We now consider the case of a given $f\in H^{-s}_{(p,q+1)}(\C^n)$ for some $s\ge0$. Then $\bar\partial^{\star}f\in H^{-s-1}_{(p,q)}(\C^n)$.
 Classicaly we can find a solution $v$ of the Laplace equation (\ref{beanopenpseudoconvex7})
 in $H^{-s+1}_{(p,q)}(\C^n)$ so that $g=f-\bar\partial v$ is also in $ H^{-s}_{(p,q+1)}(\C^n)$.
 We apply lemma \ref{LetwbeasdistributiononRoforderk} to every coefficient $g_{I,J}$ of $g$  in $\C^n=\R^{2n}$ 
so that $\vert g(z)\vert\le C\; \lbrack d(z,\C^n \setminus \Omega)\rbrack^{-k-n}$ where $k$ is the integer such that $s\le k<s+1$ and therefore:
\begin{equation}
\int_{\Omega} \vert g\vert^2\;\lbrack d(z,\partial\Omega)\rbrack^{2k+2n} d\lambda <+\infty.
\end{equation}
Corollary \ref{beanopenpseudoconvex3} implies we can solve $\bar\partial u=g=f-\bar\partial v$ with the estimate:
\begin{equation}
\int_{\Omega} \vert u\vert^2\;\lbrack d(z,\partial\Omega)\rbrack^{2k+2n} d\lambda <+\infty.
\end{equation}

We now apply theorem \ref{w(z)=w(zeta),rhoepsilon9on} in $\C^n=\R^{2n}$ to every coefficient of $u$ with $l=k+n$
($l+n=k+2n$) so that for every $r>k$, u can be extended as a current in $\C^n$ in $H^{-r-2n}_{(p,q)}(\C^n)$.
As $v\in H^{-s+1}_{(p,q)}(\C^n)$, $u+v$ is too in $H^{-r-2n}_{(p,q)}(\C^n)$ and verifies $\bar\partial(u+v)=f$ in $\Omega$.

\begin{remark}\label{Proofoftheorem2remarkprecise}
We have a little more precise result:  $f=\bar\partial(u+v)$ in $\Omega$ 
with $u\in L^{2,k+n}_{(p,q)}(\Omega)\cap H^{-r-2n}_{(p,q)}(\C^n)$ 
(for every $r>k$ particularly for $r=k+1$)
 and $v\in H^{-s+1}_{(p,q)}(\C^n)$.
We will use the fact that $v\in H^{-s+1}_{(p,q)}(\C^n)$ has a better regularity than $f\in H^{-s}_{(p,q)}(\C^n)$ in section \ref{Proofoftheorem2}.

\end{remark}

\section{Extension of theorem 1 to Stein manifolds}
\label{Proofoftheorem2} 
We will now see that theorem \ref{partialtemperedchomology} 
remains true 
  for a relatively compact open Stein subset $\Omega$ of a given Stein manifold $X$.
	We can essentially use the same reasoning as in $\C^n$. But we need much stronger technical results.\\
Let us recall that a complex manifold $X$ is Stein 
if, by definition, global holomophic functions $\cal O(X)$ separate the points of $X$, 
give local holomorphic coordinates on $X$ 
and if $X$ is holomorphically convex 
(for all compact $K$ in $X$ the holomorphic hull $\hat K$ of $K$ is compact with $\hat K:=\lbrace x\in X\vert\, \vert f(x)\vert\le \max_{\xi\in K}\vert f(\xi)\vert\rbrace$)). 
Let us also remind the two following other characterizations of a Stein manifold $X$ of complex dimension $n$. 
The first one, a complex holomorphic manifold $X$ is  Stein if and only if it can be imbedded as a  closed complex submanifold of $\C^{2n+1}$.
 The second one, $X$ is Stein if and only if there exists a strictly plurisubharmonic exhaustive function $\psi$ on $X$ of class $C^2$
 (if $X$ is a closed submanifold of $\C^{2n+1}$,
 we can take for $\psi$ the restriction to $X$ 
of the function $\vert\vert x\vert\vert^2$ defined on $\C^{2n+1}$).\\
Hence $X$ is a K\"ahlerian manifold [Weil1958]
(taking, for instance, the K\"ahler metric associated with the closed K\"ahler form $i\partial\bar\partial \psi$).\\
 It is proved in [Ele75]
 that if we consider  a relatively compact Stein open subset $\Omega$ of $X$ 
and the geodesic distance associated 
with a given K\"ahlerian metric on $X$, 
then the function: $-\log d(z,\partial\Omega)+C(\Omega,\psi)\, \psi$, 
is strictly plurisubharmonic in $\Omega$
 for a constant $C(\Omega,\psi)$ large enough.\\
 Therefore using [Dem2012] Chapter VIII, paragraph 6, Theorem 6.1 p. 376 and 6.5, p. 378 or [Dem1982] 
the following result (similar to corollary \ref{beanopenpseudoconvex3}) 
still holds on a Stein manifold:\\
\begin{theorem}\label{beanopenpseudoconvex3SteinDem12}
Let $\Omega$ be a relatively compact open Stein subset of the Stein manifold $X$.
We consider on $X$ a given K\"ahler form $\omega$,
 the geodesic distance on $X$ associated with $\omega$
 and  for $z\in\Omega$ the corresponding distance $d(z,\partial\Omega)$ to the boundary of $\Omega$.
 Let us consider a holomorphic Hermitian vector bundle $F$ of rank $r$ on $X$ and currents with values in $F$.
 Let $k\ge0$ be a given real number.
Then for every $g\in L^2_{(p,q+1)}(\Omega, F,loc )$ with $\bar\partial g=0$
 such that: $\int_{\Omega}\vert g\vert^2 \lbrack d(z,\partial\Omega)\rbrack^k  d\lambda<+\infty$, there exists $u\in L^2_{(p,q)}(\Omega, F, loc )$ such that:
\begin{equation}\label{beanopenpseudoconvex4}
 \bar\partial u=g
\end{equation}
in $\Omega$ and:
 \begin{equation}\label{beanopenpseudoconvex5Stein}
\int_{\Omega}\vert u\vert^2  \lbrack d(z,\partial\Omega)\rbrack^k  d\lambda\le C(\Omega, F, k)
\int_{\Omega}\vert g\vert^2 \lbrack d(z,\partial\Omega)\rbrack^k d\lambda,
 \end{equation}
where $d\lambda=\frac{\omega^n}{n!}$ is the positive measure on $X$ 
defined by the $(n,n)$ form $\frac{\omega^n}{n!}$ 
($C(\Omega, F, k)$ is a constant $>0$ only depending on $\Omega$, $F$ and $k$).
\end{theorem}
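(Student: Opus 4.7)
The plan is to reduce this Stein-manifold statement to Demailly's $L^2$ existence theorem for $\bar\partial$ on weakly pseudoconvex manifolds with values in a Hermitian vector bundle (Theorem 6.1 or 6.5 of Chapter VIII in [Dem2012]), by constructing a plurisubharmonic weight $\phi$ on $\Omega$ whose exponential is comparable, up to bounded multiplicative factors, to $[d(z,\partial\Omega)]^k$.

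First, I would invoke Elencwajg's theorem [Ele75] recalled in the introduction: there exists a constant $C_0 = C(\Omega,\psi) > 0$ such that
$$\chi(z) := -\log d(z,\partial\Omega) + C_0\, \psi(z)$$
is (strictly) plurisubharmonic on $\Omega$. Since $k \ge 0$, the function $\phi := k\chi$ is then plurisubharmonic on $\Omega$, and satisfies
$$e^{-\phi(z)} = [d(z,\partial\Omega)]^k \cdot e^{-k C_0 \psi(z)}.$$
Because $\psi$ is continuous and $\bar\Omega$ is compact, the factor $e^{-kC_0\psi(z)}$ is bounded above and below by positive constants on $\Omega$. The additive correction $C_0\psi$ is the natural substitute on a Stein manifold for the plurisubharmonicity of $-\log d(z,\partial\Omega)$, which is automatic only in $\C^n$.

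Next, I would apply Demailly's theorem to $\Omega$ (itself Stein, hence weakly pseudoconvex and equipped with a complete Kähler metric coming, for instance, from $i\partial\bar\partial\psi$) with the weight $\phi$ and the Hermitian bundle $F$, to produce a solution $u \in L^2_{(p,q)}(\Omega, F, loc)$ of $\bar\partial u = g$ satisfying
$$\int_\Omega |u|^2 e^{-\phi} d\lambda \le C_1(\Omega,F,k) \int_\Omega |g|^2 e^{-\phi} d\lambda.$$
Multiplying through by the bounded factor $e^{kC_0\psi}$ on both sides then yields the desired estimate (\ref{beanopenpseudoconvex5Stein}).

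The main obstacle is verifying that Demailly's theorem genuinely applies with the singular weight $\phi$, which has a logarithmic pole along $\partial\Omega$. The standard remedy is to approximate $\phi$ by a decreasing sequence $\phi_\nu$ of smooth plurisubharmonic weights, for example by regularizing $-\log d(z,\partial\Omega)$ or by cutting off to subdomains $\Omega_\varepsilon := \{z \in \Omega : d(z,\partial\Omega) > \varepsilon\}$, obtain a solution $u_\nu$ on each approximation with uniformly bounded weighted $L^2$ norm, and pass to the limit via weak $L^2$ compactness and monotone convergence. The curvature of $F$ is bounded on the compact set $\bar\Omega$ and can be absorbed into $C_1$; the strict plurisubharmonicity of $\chi$ guarantees the required positivity of $i\partial\bar\partial\phi$ on compact subsets, so no additional curvature hypothesis on $F$ is needed.
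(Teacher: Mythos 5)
Your proposal follows essentially the same route as the paper: Elencwajg's result makes $-\log d(z,\partial\Omega)+C(\Omega,\psi)\,\psi$ strictly plurisubharmonic on $\Omega$, the factor $e^{-kC_0\psi}$ is bounded above and below by positive constants on $\bar\Omega$, Demailly's $L^2$ existence theorem is applied on the Stein manifold $\Omega$ with this weight, and the merely continuous weight is smoothed before passing to a weak limit (the paper does the smoothing with Richberg's approximation theorem, [Dem2012] Ch.~I, (5.21), which is the correct substitute on a manifold for your vaguer ``regularize $-\log d$'').

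One step in your write-up would fail as stated: the claim that ``no additional curvature hypothesis on $F$ is needed'' because the strict plurisubharmonicity of $\chi$ gives positivity of $i\partial\bar\partial\phi$ on compact subsets. Demailly's theorem requires $ic(F)+i\partial\bar\partial\phi\ge\mu\,\omega$ uniformly on $\Omega$ (and in the Nakano sense for a bundle of rank $r>1$); for $k=0$ your weight is $\phi\equiv 0$ and contributes nothing, and for $k>0$ positivity ``on compact subsets'' is not a uniform lower bound. Boundedness of $c(F)$ on $\bar\Omega$ cannot simply be absorbed into the constant of the final estimate, because without the positivity hypothesis the existence theorem does not apply at all. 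The paper's remedy is to add $C_1(\Omega,F)\,\psi$ to the weight, with $C_1$ chosen so that $ic(F)+C_1\, i\partial\bar\partial\psi\ge\omega$ on $\bar\Omega$ in the Nakano sense; since $\psi$ is bounded on $\bar\Omega$ this modification only changes the multiplicative constant in (\ref{beanopenpseudoconvex5Stein}). Relatedly, Demailly's Theorem 6.5 is stated for $(n,q)$-forms with values in a line bundle, so one must also record the standard reduction of $(p,q)$-forms with values in $F$ to $(n,q)$-forms with values in $F\otimes\wedge^p T^{\star}X\otimes\wedge^n TX$, which is why Nakano positivity is the relevant notion.
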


Let us give more details about how to deduce theorem \ref{beanopenpseudoconvex3SteinDem12} 
from theorems 6.1 and 6.5 in [Dem2012].
At first let us remind Demailly's theorem 6.5 
(for the sake of simplicity we state it with a little more restrictive assumption):
\begin{theorem}\label{beanopenpseudoconvex3Stein}
Let $(X,\omega)$ be a Stein manifold $X$ of complex dimension $n$ with a given K\"ahler metric $\omega$. 
Let us consider a holomorphic  Hermitian vector bundle $F$ of rank $r$ on $X$ and a $\C^{\infty}$
 function $\phi$ on $X$ such that $ic(F)+i\partial\bar\partial\phi\ge\mu\,\omega$ 
where $c(F)$ is the curvature form of $F$ and  $\mu>0$ a given constant.  
Then for every $g\in L^2_{(n,q+1)}(X, F,loc )$ with $\bar\partial g=0$
 such that: $\int_{X}\vert g\vert^2 e^{-\phi} dV<+\infty$, there exists $u\in L^2_{(n,q)}(X, F,loc)$ such that:
\begin{equation}\label{beanopenpseudoconvex4}
 \bar\partial u=g
\end{equation}
in $X$ and:
 \begin{equation}\label{beanopenpseudoconvex5Steinhm45}
\int_{X}\vert u\vert^2 e^{-\phi}   dV\le 
\frac{1}{\mu}\int_{X}\vert g\vert^2 e^{-\phi} dV.
 \end{equation}
where $dV=\frac{\omega^n}{n!}$ is the positive measure on $X$ defined by the $(n,n)$ form $\frac{\omega^n}{n!}$.
\end{theorem}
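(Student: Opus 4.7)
The plan is to follow the classical $L^2$-existence strategy due to H\"ormander and refined by Demailly, based on the Bochner-Kodaira-Nakano identity and Hahn-Banach duality. Work in the weighted Hilbert spaces $L^2_{(n,\bullet)}(X,F)$ with inner product defined by $e^{-\phi}\,dV$; view $\bar\partial$ as a densely defined closed operator between $L^2_{(n,q)}(X,F)$ and $L^2_{(n,q+1)}(X,F)$ with Hilbert-space adjoint $\bar\partial^{\star}_\phi$; and produce $u$ from duality once an a priori inequality is available for $(n,q+1)$-forms.

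The key analytic input is the Bochner-Kodaira-Nakano identity: for every smooth compactly supported $(n,q+1)$-form $v$ with values in $F$ on the K\"ahler manifold $(X,\omega)$,
\begin{equation}
\|\bar\partial v\|^2_\phi+\|\bar\partial^{\star}_\phi v\|^2_\phi=\|D'v\|^2_\phi+\int_X\langle [ic(F)+i\partial\bar\partial\phi,\Lambda]v,v\rangle\, e^{-\phi}\,dV,
\end{equation}
where $\Lambda$ is the adjoint of wedge product with $\omega$ and $D'$ is the $(1,0)$-part of the Chern connection. Since $v$ has top holomorphic degree, the eigenvalues of $[ic(F)+i\partial\bar\partial\phi,\Lambda]$ are sums of $q+1$ eigenvalues of $ic(F)+i\partial\bar\partial\phi$ relative to $\omega$; the hypothesis $ic(F)+i\partial\bar\partial\phi\geq\mu\,\omega$ then yields the pointwise lower bound $\langle [ic(F)+i\partial\bar\partial\phi,\Lambda]v,v\rangle\geq\mu\,|v|^2$, and dropping $\|D'v\|^2_\phi$ gives the a priori inequality
\begin{equation}
\mu\,\|v\|^2_\phi\leq \|\bar\partial v\|^2_\phi+\|\bar\partial^{\star}_\phi v\|^2_\phi.
\end{equation}

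Given $g\in L^2_{(n,q+1)}(X,F)$ with $\bar\partial g=0$, the solution is produced by duality. Decompose any $v\in\mathrm{Dom}(\bar\partial)\cap\mathrm{Dom}(\bar\partial^{\star}_\phi)$ by orthogonal projection onto the closed subspace $\ker\bar\partial\subset L^2_{(n,q+1)}(X,F)$ as $v=v_1+v_2$. Using $\mathrm{Im}(\bar\partial)\subset\ker\bar\partial$ one checks that $v_1\in\mathrm{Dom}(\bar\partial^{\star}_\phi)$ with $\bar\partial^{\star}_\phi v_1=\bar\partial^{\star}_\phi v$ and $\bar\partial^{\star}_\phi v_2=0$; since $g\in\ker\bar\partial$ and $v_2\perp\ker\bar\partial$, the a priori inequality applied to $v_1$ yields
\begin{equation}
|\langle g,v\rangle_\phi|^2=|\langle g,v_1\rangle_\phi|^2\leq \|g\|^2_\phi\,\|v_1\|^2_\phi\leq \frac{\|g\|^2_\phi}{\mu}\,\|\bar\partial^{\star}_\phi v\|^2_\phi.
\end{equation}
Hence the linear form $\bar\partial^{\star}_\phi v\mapsto\langle g,v\rangle_\phi$ extends by Hahn-Banach to a continuous functional on $L^2_{(n,q)}(X,F)$ of norm at most $\|g\|_\phi/\sqrt{\mu}$; its Riesz representative $u$ satisfies $\bar\partial u=g$ weakly together with the required norm bound.

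The main obstacle is that the Bochner-Kodaira-Nakano identity and the a priori inequality are established only for smooth compactly supported $v$, while the Hahn-Banach step uses the inequality on the projection $v_1\in\mathrm{Dom}(\bar\partial^{\star}_\phi)\cap\ker\bar\partial$. Bridging the gap requires density of $C^\infty_c$-forms in $\mathrm{Dom}(\bar\partial)\cap\mathrm{Dom}(\bar\partial^{\star}_\phi)$ for the graph norm, which is classical \emph{when the K\"ahler metric is complete}. Since the given $\omega$ need not be complete, I would exploit the Stein structure of $X$: choose a smooth strictly plurisubharmonic exhaustion $\psi$, form the family of complete K\"ahler metrics $\omega_\varepsilon:=\omega+\varepsilon\,i\partial\bar\partial\chi(\psi)$ for $\chi$ convex and increasing fast enough, absorb the perturbation into a bounded modification of $\phi$ so that the curvature hypothesis is preserved with constant uniform in $\varepsilon$, solve the equation on each $(X,\omega_\varepsilon)$ with uniform $L^2$ estimates, and extract a weak limit as $\varepsilon\to 0$ by lower semicontinuity of the weighted $L^2$-norm with respect to the fixed measure $dV=\omega^n/n!$.
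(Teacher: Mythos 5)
This statement is not proved in the paper: it is quoted verbatim as Demailly's Theorem 6.5 (together with Theorem 6.1 for the Nakano-positive vector-bundle case), and the paper's only work consists in deducing Theorem \ref{beanopenpseudoconvex3SteinDem12} from it. Your proposal reproduces exactly the argument of the cited source -- Bochner--Kodaira--Nakano identity, the pointwise lower bound $\langle[ic(F)+i\partial\bar\partial\phi,\Lambda]v,v\rangle\ge(q+1)\mu|v|^2\ge\mu|v|^2$ for $(n,q+1)$-forms, the decomposition $v=v_1+v_2$ along $\ker\bar\partial$, and the Hahn--Banach/Riesz duality step -- so it is the "same approach" in the only meaningful sense here, and it is essentially correct (the omission of the $\|D'^{\star}v\|^2_\phi$ term from the identity is harmless since that term is dropped anyway). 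The one point where your sketch is genuinely loose is the completeness step: $\varepsilon\chi(\psi)$ is an unbounded exhaustion, so "absorb the perturbation into a bounded modification of $\phi$" does not literally work; the standard repair, and the reason one insists on bidegree $(n,q)$, is Demailly's monotonicity lemma ([Dem2012], Ch.~VIII, Lemma 6.3): if $\omega_\varepsilon\ge\omega$ then for $(n,q)$-forms one has $|u|^2_{\omega_\varepsilon}dV_{\omega_\varepsilon}\le|u|^2_{\omega}dV_{\omega}$ and the corresponding comparison for $\langle A^{-1}g,g\rangle$, which gives estimates uniform in $\varepsilon$ without touching $\phi$ and lets the weak limit go through.
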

In theorem 6.5 in [Dem2012], $F$ is a line bundle  
but the result is still valid for a vector bundle : 
you only need to consider the positivity of the curvature form $ic(F)$ of $F$ 
in the strong sense of Nakano as explained in [Dem2012] (theorem 6.1).
 For $(p,q)$-form (with $p\neq n$ and values in $F$) 
we consider (n,q)-forms with values in the new vector bundle $F\bigotimes \wedge^p T^{\star}(X)\bigotimes\wedge^n T(X)$.\\
 If now $\Omega$ is a relatively compact open Stein subset of $X$, 
we can choose a constant  $C_1(\Omega, F)$
 such that $ic(F)+C_1(\Omega, F,)\, i\partial\bar\partial\psi\ge\omega$ 
on $\bar\Omega$ (in the strong sense of Nakano). 
For every $C^{\infty}$ plurisubharmonic function $\phi$ on $\Omega$
 we can apply theorem  \ref{beanopenpseudoconvex3Stein} 
restricted to the Stein manifold $\Omega$ and the function  $C_1(\Omega, F)\,\psi+\phi$ 
so that (as $\psi$ is bounded on $\Omega$) we get an estimate:
\begin{equation}\label{beanopenpseudoconvex5Stein6YUJN}
\int_{\Omega}\vert u\vert^2 e^{-\phi}  dV\le 
C_2(\Omega,F)\int_{\Omega}\vert g\vert^2 e^{-\phi} dV.
 \end{equation} 
We can now take $\phi=-k\log d(z,\partial\Omega)+k\, C(\Omega,\psi)\, \psi$, (for some $k\ge 0$) 
so that (as $\psi$ is bounded on $\Omega$) 
we get the estimate (\ref{beanopenpseudoconvex5Stein}) of theorem \ref{beanopenpseudoconvex3SteinDem12} 
(for (p,q) forms with values in $F$). 
The function $\phi:=-k\log d(z,\partial\Omega)+k\,C(\Omega,\psi)\, \psi$ is only continuous on $\Omega$ 
but as $\phi$ is stritly plurisubharmonic on the Stein manifold $\Omega$ 
it can be closely approximated by a family $(\phi_{\epsilon })$ ($0<\epsilon<\epsilon_0$) of $C^{\infty}$ strictly plurisubharmonic functions on $\Omega$ 
as explained in [Dem2012] chapter 1, paragraph 5.E. page 42 
(Richberg theorem (5.21)) such that $\phi\le \phi_{\epsilon }\le \phi +\epsilon$. 
At first we obtain the estimate (\ref{beanopenpseudoconvex5Stein6YUJN})
 for the functions $\phi_{\epsilon}$ and  a solution $u_{\epsilon}$ of (\ref{beanopenpseudoconvex4}).  
Taking the limit as $\epsilon$ goes to 0 
and using the weak compacity of the closed ball of $L^{2,k}_{(p,q)}(X, F)$,
 we get (\ref{beanopenpseudoconvex5Stein6YUJN}) for $\phi$ and a weak limit $u$ of a subsequence of the family $(u_{\epsilon}$).\\

We will only use lemma \ref{LetwbeasdistributiononRoforderk} in a local chart of $X$ (i.e. in $\C^n$) : 
we do'nt need to extend this lemma  to the Riemannian Laplacian operator on $X$ with variable coefficients.
Replacing $\C^n=\R^{2n}$ by a complex Riemannian manifold $X$, 
extension theorem \ref{w(z)=w(zeta),rhoepsilon9on} is still valid as it is a local result 
(alongside the boundary of $\Omega$) using a partition of unity
 of class $C^{\infty}$ on $X$.\\
 We will give an appropriate simple extension of theorem \ref{partialtemperedchomology} in $\C^n$.
(proposition \ref{partialtemperedchomologyescrtyX}) 
which will be enough to  be able to work on Stein manifold $X$ 
and with an arbitrary holomorphic vector bundle on $X$.
Applying and iterating this last result in local charts of $X$ we will reduce the problem to J-P. Demailly's estimates for $\bar\partial$ (theorem \ref{beanopenpseudoconvex3SteinDem12}).\\
To make short, we set : $d_{\Omega}(z):=d(z,\partial \Omega)$.
Let us remind that for a given $k\in\R$, 
we denote by $L^{2,k}(\Omega)=L^{2,k}_{(0,0)}(\Omega)$ the space of functions $u\in L^2(\Omega,loc)$ 
such that
 $\int_{\Omega}\vert u\vert^2  \lbrack d_{\Omega}(z)\rbrack^{2k}  d\lambda<+\infty$ 
and we set: $\vert\vert u\vert\vert_k^2:=\int_{\Omega}\vert u\vert^2  \lbrack d_{\Omega}(z)\rbrack^{2k} d\lambda$.
We need the following preliminary lemma.
 
\begin{lemma}\label{w(z)=w(zeta),rhoepsilon9on23avril18WX}
  Let $\Omega$ be a bounded open subset of $\R^n$ 
	and $k\in\N$ be given. 
	Then for every $w\in L^{2,k}(\Omega)$
	there exists a solution 
	$v\in L^{2,k+n-1}(\Omega)$
	of the  equation: 
	$\Delta v=\frac{\partial}{\partial x_1} w$ 
	or of the equation : $\Delta v= w$  
	 such that :
	$\vert\vert v\vert\vert_{k+n-1}^2\le C(\Omega,k,n) \vert\vert w\vert\vert_k^2$.

\end{lemma}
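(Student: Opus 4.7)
The approach is to construct $v$ explicitly as the convolution of $w$ with (a derivative of) the fundamental solution of $\Delta$, and then establish the weighted $L^2$ bound via Schur's test.

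First, I extend $w$ by zero to a compactly supported function $\tilde w$ on $\R^n$. For the equation $\Delta v = w$, I set $v := E \star \tilde w$; for $\Delta v = \partial_{x_1} w$, I set $v := (\partial_{x_1} E) \star \tilde w$. Here $E$ is the standard Newtonian fundamental solution of $\Delta$ on $\R^n$; the kernels satisfy $|E(z)| = O(|z|^{2-n})$ and $|\partial_{x_1} E(z)| = O(|z|^{1-n})$, both locally integrable, so the convolutions are absolutely convergent (after localizing in $\zeta$). A standard distributional computation gives $\Delta v = \tilde w$ (resp.\ $\partial_{x_1}\tilde w$) on $\R^n$, and the desired equation holds on $\Omega$ because $\tilde w|_\Omega = w$.

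Second, the estimate $\|v\|_{k+n-1}^2 \leq C(\Omega,k,n)\|w\|_k^2$ is equivalent, after the substitution $f := w\, d_\Omega^{k}$ and $g := v\, d_\Omega^{k+n-1}$, to the $L^2(\Omega) \to L^2(\Omega)$ boundedness of the integral operator with kernel
\[
\widetilde K(z,\zeta) := |K(z-\zeta)|\, d_\Omega(z)^{k+n-1}\, d_\Omega(\zeta)^{-k},
\]
where $K$ stands for $E$ or $\partial_{x_1} E$. I would apply Schur's test with auxiliary weights $p(z) = q(z) = d_\Omega(z)^{k - 1/2}$, chosen so that the powers of $d_\Omega$ balance on both sides of the two Schur inequalities. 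Each of them reduces to a pointwise estimate of the shape
\[
\int_\Omega |z-\zeta|^{1-n}\, d_\Omega(\zeta)^{a}\, d\lambda(\zeta) \;\leq\; C\bigl(d_\Omega(z)^{a+1} + d_\Omega(z)^{1-n}\bigr)
\]
valid for $a > -1$, proved by splitting the integration into the near zone $\{|z-\zeta| \leq d_\Omega(z)/2\}$, where $d_\Omega(\zeta) \sim d_\Omega(z)$ and the radial integral of $|z-\zeta|^{1-n}$ over a ball of radius $d_\Omega(z)/2$ contributes the single factor $d_\Omega(z)$, and the far zone, where the kernel is bounded by $(d_\Omega(z)/2)^{1-n}$ and the remaining integral of $d_\Omega(\zeta)^{a}$ on the bounded set $\Omega$ is finite because $a > -1$.

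The main obstacle will be the careful bookkeeping of the powers of $d_\Omega$ in this near/far decomposition so that both Schur inequalities hold simultaneously; the loss of exactly $n-1$ in the weight exponent arises from the single factor of $d_\Omega(z)$ produced by the radial integration against $|z-\zeta|^{1-n}$, combined with the far-zone factor $d_\Omega(z)^{1-n}$. The case $K = E$ (less singular kernel $|z-\zeta|^{2-n}$, gain of two derivatives) is handled by the same argument and is strictly easier, so it suffices to concentrate on the harder case $K = \partial_{x_1} E$.
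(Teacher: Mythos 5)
Your construction fails at the very first step, and the failure is not merely technical. A function $w\in L^{2,k}(\Omega)$ only satisfies $\int_\Omega|w|^2\,[d_\Omega(z)]^{2k}\,d\lambda<+\infty$; for $k\ge1$ it need not belong to $L^1(\Omega)$ (Cauchy--Schwarz only yields $\int_\Omega|w|\,[d_\Omega(z)]^{k}\,d\lambda<+\infty$), so the extension of $w$ \emph{by zero} is in general not a locally integrable function, nor even a distribution, and the convolution $K\star\tilde w$ is undefined. This is precisely why the paper's proof begins by invoking the extension theorem of L.~Schwartz (theorem \ref{w(z)=w(zeta),rhoepsilon9on} and remark \ref{observethattheextensiontildew2}): the $\tilde w$ used there is produced by Hahn--Banach from the subspace of test functions vanishing to order $k-1$ on $\partial\Omega$, and is a compactly supported distribution of order $k$ --- a genuinely different object from the zero extension.

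Even for those $w$ for which the zero extension does make sense, your candidate $v$ does not satisfy the claimed bound, so no choice of Schur weights can rescue the argument. Take $w=\chi_{B(\zeta_0,\delta)}$ with $d_\Omega(\zeta_0)=2\delta$: then $\|w\|_k^2\le C\,\delta^{2k+n}$, while $v=(\partial_{x_1}E)\star w$ has size of order $\delta^{n}$ on a fixed compact subset of $\Omega$ at unit distance from $\zeta_0$, so that $\|v\|_{k+n-1}^2\ge c\,\delta^{2n}$ and the ratio $\delta^{2n}/\delta^{2k+n}=\delta^{n-2k}$ blows up as $\delta\to0$ whenever $2k>n$. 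The same defect appears in your Schur test: with $p=q=d_\Omega^{\,k-1/2}$ the second Schur inequality asks that $d_\Omega(\zeta)^{-k}\int_\Omega|z-\zeta|^{1-n}\,d_\Omega(z)^{2k+n-3/2}\,d\lambda(z)=O\bigl(d_\Omega(\zeta)^{k-1/2}\bigr)$, but the left-hand integral is bounded below by a positive constant uniformly in $\zeta$ (the interior of $\Omega$ contributes a fixed amount), so the condition forces $k-1/2\le -k$, which no $k\in\N$ satisfies for $n\ge2$. The underlying phenomenon is that mass of $w$ concentrated near $\partial\Omega$ --- which can be large in $L^1$ while small in $\|\cdot\|_k$ --- spreads under convolution into the interior of $\Omega$, where the weight $d_\Omega^{\,k+n-1}$ provides no damping. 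The paper circumvents exactly this: because the Schwartz extension $\tilde w$ pairs with functions near $\partial\Omega$ only through derivatives of order $\le k$ of a kernel that is smooth there (each derivative costing a factor $d_\Omega(x)^{-1}$), the duality argument splits the pairing into an interior term $I_1$ (controlled by the $L^1\star L^2$ convolution inequality) and a boundary term $I_2$ (controlled by the order-$k$ bound on $\tilde w$), summed over a dyadic decomposition in $d_\Omega$. To repair your proof you would have to replace the zero extension by such an extension --- equivalently, subtract from $w$ its dangerous low-order moments near $\partial\Omega$ --- before any kernel estimate of Schur type can be expected to close.
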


We will apply lemma \ref{w(z)=w(zeta),rhoepsilon9on23avril18WX} in $\C^n=\R^{2n}$, so that $v\in L^{2,k+2n-1}(\Omega)$ 
and we will consider the equations : $\Delta v=\frac{\partial}{\partial x_j} w$, $1\le j\le n$ 
(we only refer to the equation $\Delta v= w$ for completeness).

\begin{proof} 
We can suppose that $Diam\,\Omega<1$. 
According to theorem \ref{w(z)=w(zeta),rhoepsilon9on} 
remark \ref{observethattheextensiontildew2},
$w$ can be extended to $\R^n$ as a distribution of order at most $k$ with compact support 
that we denote by $\tilde w$. 
We suppose that the support of $\tilde w$  
is a subset of an open  ball $B$ of $\R^n$ and we set $L:=\bar B$.\\
 The solution $v$ is given by the convolution 
with the elementary solution $E=-c_n\vert x\vert^{-n+2}$ of the Laplacian in $\R^n$, 
$v:=E\star (\frac{\partial}{\partial x_1}\tilde w)=(\frac{\partial}{\partial x_1}E)\star \tilde w$ 
(resp. $v:=E\star \tilde w$). Let us set $K:=\frac{\partial}{\partial x_1}E=(n-2)c_n\vert x\vert^{-n+1}$ (resp. $K=E$).
We will need the fact that $K\in L^1_{loc}(\R^n)$ 
and is of class $\mathrm C^{\infty}$ outside $0$.
We only have to prove that $v\vert_{\Omega}$ verifies 
the right estimate : $v\in L^{2,k+n-1}(\Omega)$.
 Let $\psi$ be a cutoff function in $\mathcal D(\R^n)$
 such that $0\le \psi\le 1$, $\psi=1$
 in a neighborhood of the closed ball $\bar B(0,1)$, 
 with support in the  the open ball $B(0,2)$ and such that $\psi(x)=\psi(-x)$ for all $x\in\R^n$. 
We can split $v$ 
into $v:=K\star \tilde w=(\psi K)\star \tilde w+\lbrack(1-\psi)K\rbrack\star \tilde w$.
 $(1-\psi)K\in\mathrm C^{\infty}(\R^n)$ and $\tilde w$ is a distribution 
with compact support in $\R^n$,
 then $\lbrack(1-\psi)K\rbrack\star \tilde w$ is in $\mathrm C^{\infty}(\R^n)$ 
and is bounded on the bounded subset $\bar\Omega$.
Therefore we only have to verify that $v':=(\psi K)\star \tilde w$ 
is in $L^{2,k+n-1}(\Omega)$.
 Henceforth we replace $K$ by $\tilde K:=\psi K$. 
$\tilde K$ has its support in the ball $B(0,2)$, $\tilde K(x)=\tilde K(-x)$ for all $x\in\R^n$ 
 and we  only have to consider $v':=\tilde K\star \tilde w$ instead of $v$.\\
 The idea of the proof is to locally use in $\Omega$ 
the classical $L^2$ inequality for convolution
$\vert\vert \tilde K\star w\vert\vert_{L^2}\le\vert\vert \tilde K\vert\vert_{L^1} \vert\vert w\vert\vert_{L^2}$ in $\R^n$ 
(of course after truncating $w$ by an appropriate cutoff function) (cf. [H\"or1983] Corollary 4.5.2. p. 117).
Close to the boundary $\partial\Omega$ we get another estimate, 
using the extension $\tilde w$  of $w$ as a distribution in $\R^n$.
It will give us a polynomial gap $\lbrack d_{\Omega}(z)\rbrack^{-n+1}$. 
We have to prove the following inequality 
(using duality between $L^{2,k+n-1}(\Omega)$ and $L^{2,-k-n+1}(\Omega)$):
\begin{equation}
\label{artialtemperedchomolog56laplacientemperate1UX}
\vert < \tilde K\star\tilde w,\phi>\vert^2\le C(\Omega,\tilde K,n) \vert\vert w\vert\vert_{k}^2 \vert\vert \phi\vert\vert_{-k-n+1}^2 
\end{equation}
for all $\phi \in\mathcal D(\Omega)$ or equivalently:
\begin{equation}
\label{artialtemperedchomolog56laplacientemperate2UVX}
\vert < \tilde K\star\phi,\tilde w>\vert^2\le C(\Omega,\tilde K,n) \vert\vert w\vert\vert_{k}^2 \vert\vert \phi\vert\vert_{-k-n+1}^2 
\end{equation}
for all $\phi\in\mathcal D(\Omega)$ 
(we have $<T\star S,\phi>=<\check T\star\phi,S>$
 for $S$, $T$ in $\mathcal E'(\R^n)$, 
 $\check T(x):=T(-x)$, we take $T=\tilde K$ which is symmetric 
and $S=\tilde w$).\\
For a given $\epsilon>0$, let us define $\Omega_{\epsilon}:=\,\lbrace x\in\Omega\vert\, d_{\Omega}(x)>\epsilon\rbrace$,
$K_{\epsilon}:=\lbrace x\in \Omega\vert\, d_{\Omega}(x)\ge \epsilon\rbrace$, 
$V_{\epsilon}:=\,\lbrace x\in \Omega\vert\, \epsilon< d_{\Omega}(x)<2\epsilon\rbrace$
 so that for all $x$ and $y$ in  $V_{\epsilon}$
  we have the inequality: 
\begin{equation}
\label{artialtemperedchomolog56laplacientemperate3dxdyUV}
\frac{1}{2}d_{\Omega}(y)\le d_{\Omega}(x)\le 2d_{\Omega}(y).
\end{equation}
Of course we will later take $\epsilon=2^{-j}$, $j\in\N$ 
and fulfill $\Omega$ with the exhaustive family of sets $V_{\epsilon}$.
 For a given subset $A$ of $\R^n$
 we denote by $\chi_A$ be  the characteristic  function of $A$: $\chi_A(x)=1$ if $x\in A$, 
$\chi_A(x)=0$ if $x\notin A$. 
We define $\psi_{\epsilon}=\rho_{\frac{\epsilon}{4}}\star\chi_{K_{\frac{\epsilon}{2}}}$.
$\psi_{\epsilon}\in\mathcal D(\Omega)$
is a function 
such that  $0\le\psi_{\epsilon}\le1$, $\psi_{\epsilon}=1$ 
on a neighborhood of $K_{\epsilon}:=\lbrace x\in \Omega\vert\, d_{\Omega}(x)\ge \epsilon\rbrace$, 
$Supp\,\psi_{\epsilon}\subset \Omega_{\frac{\epsilon}{4}}$ 
and $\vert D^{\alpha}\psi_{\epsilon}\vert\le C(\alpha) \epsilon^{-\vert\alpha\vert }$ 
for all multi-indices $\alpha\in N^n$ 
where $C(\alpha)$ is a constant only dependent on $\alpha$. 
Indeed we have :
$\psi_{\epsilon}(y)=\int_{K_{\frac{\epsilon}{2}}}\rho_{\frac{\epsilon}{4}}(y-x) d\lambda (x)$ 
 with $\rho_{\epsilon}(x)= \frac{1}{\epsilon^n}\rho(\frac{x}{\epsilon})$ so that
 $D^{\alpha}\psi_{\epsilon}(y)=\frac{1}{\epsilon^{n+\vert\alpha\vert}}\int_{K_{\frac{\epsilon}{2}}} (D^{\alpha}\rho)({\frac{y-x}{\epsilon}}) d\lambda (x)$
 and 
$\vert D^{\alpha}\psi_{\epsilon}(y)\vert\le\frac{1}{\epsilon^{\vert\alpha\vert}}\int_{R^n} \vert D^{\alpha}\rho\vert(x) d\lambda (x)$.\\
We also set : $\psi'_{\epsilon}=1-\psi_{\epsilon}$ 
so that $\psi_{\epsilon}+\psi'_{\epsilon}=1$ on $\Omega$.
$Supp\, \psi'_{\epsilon}\subset \lbrace x\in\Omega\vert\,d_{\Omega}(x)<\epsilon\rbrace$ 
so that  $Supp\,\psi'_{\epsilon}\cap K_{\epsilon}=\emptyset$.\\
 $\epsilon$ being fixed, 
let us assume at first that $Supp\,\phi\subset V_{\epsilon}$.
We set: $I_1:=<\tilde K\star\phi,\psi_{\epsilon}\,\tilde w>$ 
and $I_2:=<\tilde K\star\phi,\psi'_{\epsilon}\,\tilde w>$
so that we have:
\begin{equation}
\label{artialtemperedchomolog56laplacientemperate41UVWX}
\vert < \tilde K\star\phi,\tilde w>\vert^2=\vert I_1+I_2\vert^2\le 2\vert I_1\vert^2+2\vert I_2\vert^2
\end{equation}

with  $I_1:=< \tilde K\star\phi,\psi_{\epsilon}\,\tilde w>=<\psi_{\epsilon}(y)\int_{x\in Supp \phi} \tilde K(y-x)\phi(x) d\lambda (x),\tilde w>_y$ 
and $I_2:=< \tilde K\star\phi,\psi'_{\epsilon}\,\tilde w>=<\psi'_{\epsilon}(y)\int_{x\in Supp \phi} \tilde K(y-x)\phi(x) d\lambda (x),\tilde w>_y$.\\
As $Supp\,\psi_{\epsilon}\subset\Omega$
 we have $\psi_{\epsilon}\,\tilde w=\psi_{\epsilon}\,w$
 ($w$ is in $L^{2,k}(\Omega)$ 
and $\tilde w$ is an extension of $w$ 
as a distribution in $\mathcal D'(\R^n)$),\\
and then  $I_1:=< \tilde K\star\phi,\psi_{\epsilon}\,\tilde w>=< \tilde K\star(\psi_{\epsilon}\, w),\phi>$.
We use Cauchy-Schwarz inequality and convolution inequality :\\
$\vert I_1\vert^2\le \vert\vert \tilde K\star(\psi_{\epsilon}\, w)\vert\vert_2^2 \,\,\vert\vert\phi\vert\vert_2^2
\le\vert\vert \tilde K\vert\vert_1^2 \,\,\vert\vert\psi_{\epsilon}\, w\vert\vert_{2}^2 \,\,\vert\vert\phi\vert\vert_{2}^2$.\\
$\vert I_1\vert^2
\le\vert\vert \tilde K\vert\vert_1^2 \,\,\vert\vert\psi_{\epsilon}\, w\vert\vert_{2}^2 \,\,\vert\vert\phi\vert\vert_{2}^2
\le\vert\vert \tilde K\vert\vert_1^2 \,(\frac{\epsilon}{4})^{-2k}\,\vert\vert\psi_{\epsilon}\, w\vert\vert_{2,k}^2 \,(2\epsilon)^{2(k+n-1)}\,\vert\vert\phi\vert\vert_{2,-k-n+1}^2$.\\
Indeed as $d_{\Omega}(x)\ge \frac{\epsilon}{4}$ on the support of $\psi_{\epsilon}$,
we have: $(\frac{\epsilon}{4})^{2k}\vert\vert\psi_{\epsilon}\, w\vert\vert_{2}^2\le\vert\vert\psi_{\epsilon}\, w\vert\vert_{2,k}^2$ 
and as $d_{\Omega}(x)\le 2\epsilon$ on $V_{\epsilon}$ 
which contains the support of $\phi$ ,
we have: $(2\epsilon)^{-2(k+n-1)}\vert\vert\phi\vert\vert_{2}^2\le\vert\vert\phi\vert\vert_{2,-k-n+1}^2$.\\
Finally we get: 
\begin{equation}
\label{artialtemperedchomolog56laplacientemperate52341cdgUVWX}
\vert I_1\vert^2 
\le 2^{6k+2n-2} \epsilon^{2n-2} \vert\vert \tilde K\vert\vert_1^2 \,\,\vert\vert w\vert\vert_{2,k}^2 \,\,\vert\vert\phi\vert\vert_{2,-k-n+1}^2
\end{equation}
 for every $\phi$ with $Supp\,\, \phi\subset V_{\epsilon}$.\\

 We now consider the term $I_2$. 
As $\tilde w$ is a distribution of order at most $k$ and has in support in the ball $L$,
 $\tilde w$ satisfies an inequality :
\begin{equation}
\label{artialtemperedchomolog56laplacientemperate52341UVWX}
\vert<\tilde w,\psi>\vert\le C(\tilde w)\,\sup_{y\in\, L} \Sigma_{\vert\alpha\vert\le k}
 \, \vert D_{y}^{\alpha}\psi(y)\vert,
\end{equation}

for all $\psi\in \mathrm{C}^{l}(\R^n)$
 with $C(\tilde w):=C(\Omega,L)\vert\vert w\vert\vert_k$ 
using  theorem \ref{w(z)=w(zeta),rhoepsilon9on} 
remark \ref{observethattheextensiontildew2}
(replacing $L$ by a compact neighborhood of $L$ if necessary).\\
We apply inequality (\ref{artialtemperedchomolog56laplacientemperate52341UVWX}) 
to the function $\psi (y):=\int_{x\in Supp\, \phi} \psi'_{\epsilon}(y) K(y-x)\phi(x) d\lambda (x)$ 
and we take the derivative in the variable $y$ under the symbol $\int$ 
so that we get :
\begin{equation}
\label{artialtemperedchomolog56laplacientemperate521UVWX}
\vert I_2)\vert\le C(\tilde w) \max_{y\in L\setminus K_{\frac{\epsilon}{4}}} 
\Sigma_{\vert\alpha\vert\le k}\,\vert\int_{x\in Supp\,\phi} D^{\alpha}_y\lbrack\psi'_{\epsilon}(y)\tilde K(y-x)\rbrack\,\phi(x) d\lambda (x)\;\vert
\end{equation}
Let us observe that $Supp\,\psi'_{\epsilon}\subset \R^n\setminus K_{\frac{\epsilon}{4}}$.\\
For $x\in\mathrm{Supp}\,\phi\subset V_{\epsilon}$ and $y\in\R^n\setminus K_{\frac{\epsilon}{4}}$ 
we have $\vert x-y\vert\ge \frac{3\epsilon}{4}>\frac{3}{8} d_{\Omega}(x)$
(there exists $z\in\partial\Omega$ such that $ d_{\Omega}(y)=\vert y-z\vert<\frac{\epsilon}{4}$, 
then $\vert x-z\vert\ge d_{\Omega}(x)>\epsilon$ 
so that
$\vert x-y\vert\ge \vert x-z\vert-\vert y-z\vert>\epsilon-\frac{\epsilon}{4}=\frac{3\epsilon}{4}$)
and then\\
$\vert D_y^{\beta}\tilde K(y-x)\vert=O(\vert x-y\vert^{-\vert\beta\vert-n+1})
=O(\lbrack d_{\Omega}(x)\rbrack^{-\vert\beta\vert-n+1})$. 
We also have $\vert D_y^{\gamma}\psi'_{\epsilon}(y)\vert=O(\epsilon^{-\vert\gamma\vert})
=O(\lbrack d_{\Omega}(x)\rbrack^{-\vert\gamma\vert})$ so that (as we have $\vert\beta\vert+\vert\gamma\vert=\vert\alpha\vert$) :
\begin{equation}
\label{artialtemperedchomolog56laplacientemperate6121UVWX} 
\vert D^{\alpha}_y\lbrack\psi'_{\epsilon}(y) \tilde K(y-x)\rbrack\vert
\le C_1(\Omega)\lbrack d_{\Omega}(x)\rbrack^{-\vert\alpha\vert-n+1}
\end{equation}
(\ref{artialtemperedchomolog56laplacientemperate521UVWX}) and (\ref{artialtemperedchomolog56laplacientemperate6121UVWX}) imply:
\begin{equation}
\label{artialtemperedchomolog56laplacientemperate523ret1UVX}
\vert I_2)\vert\le C(\Omega,\tilde w) 
\int_{x\in Supp\,\phi} \lbrack d_{\Omega}(x)\rbrack^{-k-n+1}\,\vert\phi(x)\vert d\lambda (x)
\end{equation}
Using the Cauchy-Schwarz inequality we have:
\begin{equation}
\label{artialtemperedchomolog56laplacientemperate523retschz1UVX}
\vert I_2\vert^2\le \lbrack C(\Omega,\tilde w)\rbrack^2(\lambda (Supp\,\phi))^2 
\int_{x\in Supp\,\phi} \lbrack d_{\Omega}(x)\rbrack^{-2k-2n+2}\,\vert\phi(x)\vert^2 d\lambda (x)
\end{equation}
and then:
\begin{equation}
\label{artialtemperedchomolog56laplacientemperate523retschz21UVWX}
\vert I_2\vert^2\le \lbrack C(\Omega, \tilde w)\rbrack^2(\lambda (V_{\epsilon}))^2 \vert\vert\phi\vert\vert_{2,-k-n+1}^2
\end{equation}
(as $Supp\,\, \phi\subset V_{\epsilon}$ and as $\vert\vert\phi\vert\vert_{2,-k-n+1}^2:=\int_{x\in V_{\epsilon}} \lbrack d_{\Omega}(x)\rbrack^{-2k-2n+2}\,\vert\phi(x)\vert^2 d\lambda (x))$.\\
Using inequalities (\ref{artialtemperedchomolog56laplacientemperate41UVWX}), (\ref{artialtemperedchomolog56laplacientemperate52341cdgUVWX}) (for $I_1$)
and (\ref{artialtemperedchomolog56laplacientemperate523retschz21UVWX}) (for $I_2$) we get:
\begin{equation}
\label{artialtemperedchomolog56laplacientemperate4sdghx1UVX}
\vert < K\star\tilde w,\phi>\vert^2\le C_1(\Omega,\tilde w)\lbrack(\lambda (V_{\epsilon}))^2 +\epsilon^{2n-2}\rbrack
\int_{x\in V_{\epsilon}} \lbrack d_{\Omega}(x)\rbrack^{-2k-2n+2}\,\vert\phi(x)\vert^2 d\lambda (x)
\end{equation}
for all $\phi$ with $Supp\,\phi\subset V_{\epsilon}$ and therefore:
\begin{equation}
\label{artialtemperedchomolog56laplacientemperate4sdghx21UVX}
\int_{V_{\epsilon}}\vert K\star\tilde w\vert^2\lbrack d_{\Omega}(x)\rbrack^{2k+2n-2}d\lambda\le C_1(\Omega,\tilde w)
 \lbrack(\lambda (V_{\epsilon}))^2 + \epsilon^{2n-2}\rbrack
\end{equation}
with $C_1(\Omega,\tilde w):=2\lbrack C(\Omega, \tilde w)\rbrack^2+22^{6k+2n-2} \vert\vert \tilde K\vert\vert_k^2 \,\,\vert\vert w\vert\vert_{2,k}^2
\le C_2(k,n,\Omega)\vert\vert w\vert\vert_{2,k}^2$.\\
As $\lbrack\lambda(V_{\epsilon})\rbrack^2\le \lambda(V_{\epsilon})\rbrack$, 
taking $\epsilon=2^{-j}$, $j\in\N$,
 summing on $j$ and setting $\Omega':=\bigcup_{j\in\N} V_{2^{-j}}$ we have:

\begin{equation}
\label{artialtemperedchomolog56laplacientemperate4sdghx2311UVWX}
\int_{\Omega'}\vert K\star\tilde w\vert^2\lbrack d_{\Omega}(x)\rbrack^{2k+2n-2}d\lambda\le C_1(\Omega,\tilde w)
\lbrack\lambda(\Omega')+2\rbrack, 
\end{equation}
Taking $\epsilon=\frac{3}{4}2^{-j}$, $j\in\N$,
 summing on $j$ and setting $\Omega'':=\bigcup_{j\in\N} V_{\frac{3}{4}2^{-j}}$ we have:

\begin{equation}
\label{artialtemperedchomolog56laplacientemperate4sdghx2312UVWX}
\int_{\Omega''}\vert K\star\tilde w\vert^2\lbrack d_{\Omega}(x)\rbrack^{2k+2n-2}d\lambda\le C_1(\Omega,\tilde w)
\lbrack\lambda(\Omega'')+2\rbrack, 
\end{equation}
Giving $j\in\N$, the set $K_{2^{-j}}\setminus \Omega_{2^{-j}}:=\,\lbrace x\in(\Omega)\vert\, d_{\Omega}(x)=2^{-j}\rbrace$
 is a subset of $\Omega''$ so that
 $\Omega\subset\Omega'\cup\Omega''$.  Summing (\ref{artialtemperedchomolog56laplacientemperate4sdghx2311UVWX}) 
and (\ref{artialtemperedchomolog56laplacientemperate4sdghx2312UVWX}) we finally have :

\begin{equation}
\label{artialtemperedchomolog56laplacientemperate4sdghx23113UVWX}
\int_{\Omega}\vert K\star\tilde w\vert^2\lbrack d_{\Omega}(x)\rbrack^{2k+2n-2}d\lambda\le 2 C_1(\Omega,\tilde w)
\lbrack\lambda(\Omega)+2\rbrack, 
\end{equation}

i.e. 
$\vert\vert v'\vert\vert_{k+n-1}^2\le
 2 C_1(\Omega,\tilde w)\lbrack\lambda(\Omega)+2\rbrack$. 
\end{proof}

We can now prove the following slight extension of theorem \ref{partialtemperedchomology}: the aim of which is to  iterate the construction made in $\C^n$.
\begin{proposition}
\label{partialtemperedchomologyescrtyX}
 Let $\Omega$ be a bounded Stein open subset of $\C^n$ 
and $f$ be a given current of bidegree $(p,q+1)$ on $\C^n$ with compact support 
which is $\bar\partial$-closed on $\Omega$ 
and can be written $f=g+h$ 
 with $g_{\vert\Omega}\in L^{2,k}_{(p,q+1)}(\Omega)$ 
and $h\in H^{-s}_{(p,q+1)}(\C^n)$ for some $s\le k$.
Then there exists a current $w=u+v$ of bidegree $(p,q)$ (with compact support) in $\C^n$ such that:
\begin{equation}\label{}
\bar\partial w=f,
\end{equation}
$in\,\Omega$,\\
with $w=u+v\in H_{(p,q)}^{-k-5n-2}(\C^n)$,
  $u_{\vert\Omega}\in L_{(p,q)}^{2,k+4n+1}(\Omega)$ 
	and $v\in H_{(p,q)}^{-s+1}(\C^n)$).
\end{proposition}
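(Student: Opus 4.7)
The plan is to adapt the four-step strategy of the proof of Theorem~\ref{partialtemperedchomology}, splitting the Laplacian step according to the decomposition $f=g+h$. First, I would solve the Laplace equation $\tfrac{1}{2}\Delta v=\bar\partial^{\star}h$ on $\C^n$ by convolution with the elementary solution of $\Delta$, exactly as in the Sobolev case of Theorem~\ref{partialtemperedchomology}: since $\bar\partial^{\star}h\in H^{-s-1}_{(p,q)}(\C^n)$, this produces the element $v\in H^{-s+1}_{(p,q)}(\C^n)$ promised in the conclusion. For the $L^{2,k}$ piece I would apply Lemma~\ref{w(z)=w(zeta),rhoepsilon9on23avril18WX} in $\C^n=\R^{2n}$ to each coefficient of $\bar\partial^{\star}g$, which has the form $\partial_{z_j}g_{I,jK}$ with $g_{I,jK}\in L^{2,k}(\Omega)$: this yields $v'\in L^{2,k+2n-1}_{(p,q)}(\Omega)$ solving $\tfrac{1}{2}\Delta v'=\bar\partial^{\star}g$ on $\Omega$. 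Setting $g_1:=f-\bar\partial v-\bar\partial v'$, the same computation as in (\ref{beanopenpseudoconvex9}) (using that $\bar\partial$ commutes with $\Delta$, that $\bar\partial^2=0$, and that $\bar\partial f=0$ on $\Omega$) gives $\tfrac{1}{2}\Delta g_1=\bar\partial^{\star}\bar\partial f=0$ on $\Omega$, so $g_1$ is harmonic on $\Omega$.

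The next task is to estimate the boundary growth of $g_1$ via Lemma~\ref{LetwbeasdistributiononRoforderk}, which requires viewing $g_1$ as a distribution on $\C^n$. Following Remark~\ref{observethattheextensiontildew1}, Theorem~\ref{w(z)=w(zeta),rhoepsilon9on} extends $g_{\vert\Omega}$ to some $\tilde g\in H^{-k-n-1}(\C^n)$ (with $l=k$) and $v'$ to some $\tilde v'\in H^{-k-3n}(\C^n)$ (with $l=k+2n-1$). Setting $\tilde g_1:=\tilde g+h-\bar\partial v-\bar\partial\tilde v'$, which restricts to $g_1$ on $\Omega$, and comparing the Sobolev indices $-k-n-1$, $-s$, $-s$, $-k-3n-1$ of the four summands (using $s\le k$), one finds $\tilde g_1\in H^{-k-3n-1}(\C^n)$. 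Applying the Sobolev form of Lemma~\ref{LetwbeasdistributiononRoforderk} (with $\R^n$ replaced by $\R^{2n}$) to each coefficient of $\tilde g_1$, which is harmonic on $\Omega$, yields the pointwise bound $|g_1(z)|\le C\,[d(z,\partial\Omega)]^{-k-4n-1}$ and hence $g_1\in L^{2,k+4n+1}_{(p,q+1)}(\Omega)$.

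Corollary~\ref{beanopenpseudoconvex3} with weight exponent $2(k+4n+1)$ then supplies $u'\in L^{2,k+4n+1}_{(p,q)}(\Omega)$ with $\bar\partial u'=g_1$ on $\Omega$. Setting $u:=u'+v'$ and noting that $v'\in L^{2,k+2n-1}(\Omega)\subset L^{2,k+4n+1}(\Omega)$, we have $u\in L^{2,k+4n+1}_{(p,q)}(\Omega)$ and $\bar\partial(u+v)=g_1+\bar\partial v'+\bar\partial v=f$ on $\Omega$. A final appeal to Theorem~\ref{w(z)=w(zeta),rhoepsilon9on}, with $l=k+4n+1$, extends $u$ from $\Omega$ to a current on $\C^n$ in $H^{-k-5n-2}_{(p,q)}(\C^n)$; and since $s\le k$, $v\in H^{-s+1}(\C^n)\subset H^{-k-5n-2}(\C^n)$, so $w=u+v$ lies in $H^{-k-5n-2}_{(p,q)}(\C^n)$ as asserted. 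A cutoff applied at the outset ensures the compact-support requirement.

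The main obstacle is the bookkeeping in the middle step: because $v'$ is constructed only in $\Omega$, one must extend it (as well as $g_{\vert\Omega}$) to $\C^n$ using Theorem~\ref{w(z)=w(zeta),rhoepsilon9on}, track the Sobolev index through the combination defining $\tilde g_1$, and verify that the exponent produced by the Sobolev form of Lemma~\ref{LetwbeasdistributiononRoforderk} (namely $-k-4n-1$) and by the subsequent global extension step ($-k-5n-2$) match exactly the values stated in the proposition.
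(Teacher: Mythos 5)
Your proof is correct and follows essentially the same route as the paper's: split the Laplace step according to $f=g+h$, solve $\tfrac12\Delta v'=\bar\partial^{\star}g$ via Lemma \ref{w(z)=w(zeta),rhoepsilon9on23avril18WX} in $\R^{2n}$ and $\tfrac12\Delta v=\bar\partial^{\star}h$ by convolution, extend with Theorem \ref{w(z)=w(zeta),rhoepsilon9on}, apply Lemma \ref{LetwbeasdistributiononRoforderk} and Corollary \ref{beanopenpseudoconvex3}, and extend again, with all Sobolev exponents ($-k-n-1$, $-k-3n$, $-k-3n-1$, $k+4n+1$, $-k-5n-2$) matching the paper's. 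Your regrouping of the weighted-$L^2$ Laplace solution $v'$ into $u$ rather than into $v$ is in fact what is needed for the stated conclusion $v\in H^{-s+1}_{(p,q)}(\C^n)$ to hold literally.
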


Let us observe that $v$ has a better regularity than $h$. $u$ has still a polynomial $L^2$ growth in $\Omega$  as $g$ (even if it is larger than that of $g$).

\begin{proof}
 Using  extension theorem \ref{w(z)=w(zeta),rhoepsilon9on} (remark \ref{observethattheextensiontildew1}),
 we can assume that $g$ and therefore $f$ are in $H^{-k-n-1}_{(p,q+1)}(\C^n)$.
At first we have to solve:
\begin{equation}\label{beanopenpseudoconvex10proposi}
\frac{1}{2}\Delta v=
\bar\partial^{\star} f=\bar\partial^{\star} g+\bar\partial^{\star} h.
\end{equation} 
We solve separately the equations $\frac{1}{2}\Delta v_1=\bar\partial^{\star} g$ in $\Omega$
and $\frac{1}{2}\Delta v_2==\bar\partial^{\star} h$ in $\C^n$ . 
Using lemma \ref{w(z)=w(zeta),rhoepsilon9on23avril18WX} (in $\R^{2n}$) 
we can at first find $v_1\in L_{(p,q)}^{2,k+2n-1}(\Omega)$ 
and then  using theorem \ref{w(z)=w(zeta),rhoepsilon9on} (remark \ref{observethattheextensiontildew1})
 we can find an extension  of $v_1$ to $\C^n$ 
which is a distribution in $H^{-k-3n}(\C^n)$.
We set $v:=v_1+v_2$. 
As $h\in H^{-s}(\C^n)$, $v_2$ is in $H^{-s+1}(\C^n)$ 
so that $v:=v_1+v_2\in H_{(p,q)}^{-k-3n}(\C^n)$.  
$f-\bar\partial v\in H_{(p,q+1)}^{-k-3n-1}(\C^n)$ is harmonic in $\Omega$  
and therefore (using lemma \ref{LetwbeasdistributiononRoforderk})  
$f-\bar\partial v$ is in $ L_{(p,q+1)}^{2,k+4n+1}(\Omega)$. 
Finally corollary \ref{beanopenpseudoconvex3} gives a solution 
$u\in L_{(p,q+1)}^{2,k+4n+1}(\Omega)$ 
of the equation $\bar\partial u=f-\bar\partial v$ in $\Omega$. 
Setting $w=u+v$, we get $\bar\partial w=f$ in $\Omega$. 
Using still theorem \ref{w(z)=w(zeta),rhoepsilon9on} (remark \ref{observethattheextensiontildew1})
we get an extension of $u$ in $H_{(p,q)}^{-k-5n-2}(\C^n)$
 so that $w\in H_{(p,q)}^{-k-5n-2}(\C^n)$.
\end{proof}
We need the following lemma making comparisons between the several distances to boundary we have to consider.
\begin{lemma}
\label{partialtemperedchomologyescrtynlminX}
Let $\Omega$ and $\Omega_j$ 
be two bounded open subsets of the complex Riemannian  manifold $X$ 
such that $\Omega\cap\Omega_j\neq\emptyset$ 
and $u\in L^{2,k}_{(p,q)}(\Omega)$. Then:
 \begin{equation}\label{avedOmegacapegageminpsijwedgeX}
\int_{\Omega\cap\Omega_j}\vert u\vert^2 \lbrack d_{\Omega\cap\Omega_j}(z)\rbrack^{2k}d\lambda\le
\int_{\Omega}\vert  u\vert^2 \lbrack d_{\Omega}(z)\rbrack^{2k}d\lambda.
\end{equation}
If $\psi_j\in\mathcal D(\Omega_j)$, and $u\in L^{2,k}_{(p,q)}(\Omega\cap\Omega_j)$ then:
\begin{equation}\label{avedOmegacapegageminpsijghtkX}
\int_{\Omega}\vert \psi_j u\vert^2 \lbrack d_{\Omega}(z)\rbrack^{2k}d\lambda\le\epsilon_j^{-2k}
\int_{\Omega\cap\Omega_j}\vert \psi_j u\vert^2 \lbrack d_{\Omega\cap\Omega_j}(z)\rbrack^{2k}d\lambda,
\end{equation}
and
\begin{equation}\label{avedOmegacapegageminpsijghtk1X}
\int_{\Omega}\vert \bar\partial\psi_j\wedge u\vert^2 \lbrack d_{\Omega}(z)\rbrack^{2k}d\lambda\le\epsilon_j^{-2k}
\int_{\Omega\cap\Omega_j}\vert \bar\partial\psi_j\wedge u\vert^2 \lbrack d_{\Omega\cap\Omega_j}(z)\rbrack^{2k}d\lambda,
\end{equation}
 with $\epsilon_j:=\min_{z\in Supp\,\psi_j} d_{\Omega_j}(z)$.

\end{lemma}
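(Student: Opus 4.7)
The plan is to reduce all three bounds to a single pointwise comparison of the three distance functions $d_{\Omega}$, $d_{\Omega_j}$, and $d_{\Omega\cap\Omega_j}$. First I would establish, for every $z\in\Omega\cap\Omega_j$, the identity
\[
d_{\Omega\cap\Omega_j}(z)=\min\bigl(d_{\Omega}(z),d_{\Omega_j}(z)\bigr).
\]
The lower bound $d_{\Omega\cap\Omega_j}(z)\ge\min(d_\Omega(z),d_{\Omega_j}(z))$ is immediate from the set-theoretic inclusion $\partial(\Omega\cap\Omega_j)\subset\partial\Omega\cup\partial\Omega_j$ (the distance to a larger set of potential boundary points is smaller). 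The reverse inequalities $d_{\Omega\cap\Omega_j}(z)\le d_\Omega(z)$ and $d_{\Omega\cap\Omega_j}(z)\le d_{\Omega_j}(z)$ come from a standard geodesic continuation argument on the Riemannian manifold $X$: a minimizing geodesic from $z$ to the nearest point of $\partial\Omega$ (respectively $\partial\Omega_j$) must hit $\partial(\Omega\cap\Omega_j)$ no later than its endpoint, producing a point of $\partial(\Omega\cap\Omega_j)$ at distance at most $d_\Omega(z)$ (resp.\ $d_{\Omega_j}(z)$) from $z$.

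Inequality (\ref{avedOmegacapegageminpsijwedgeX}) follows immediately: since $k\ge 0$, the identity gives $[d_{\Omega\cap\Omega_j}(z)]^{2k}\le[d_\Omega(z)]^{2k}$ on $\Omega\cap\Omega_j$, so the left-hand integral is majorized by $\int_{\Omega\cap\Omega_j}|u|^2[d_\Omega(z)]^{2k}\,d\lambda\le\int_\Omega|u|^2[d_\Omega(z)]^{2k}\,d\lambda$ after enlarging the domain of integration (permitted because $|u|^2\ge 0$).

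For inequality (\ref{avedOmegacapegageminpsijghtkX}), the integrand on the left is supported in $\mathrm{Supp}\,\psi_j\cap\Omega\subset\Omega\cap\Omega_j$, and on $\mathrm{Supp}\,\psi_j$ one has $d_{\Omega_j}(z)\ge\epsilon_j$ by the definition of $\epsilon_j$. Combining this with the identity from the first step produces the pointwise comparison $d_{\Omega}(z)\le\epsilon_j^{-1}\,d_{\Omega\cap\Omega_j}(z)$ on $\mathrm{Supp}\,\psi_j$, which I would verify by cases: when $d_\Omega(z)\le d_{\Omega_j}(z)$ one has $d_{\Omega\cap\Omega_j}(z)=d_\Omega(z)$ so the ratio equals $1\le\epsilon_j^{-1}$; otherwise $d_{\Omega\cap\Omega_j}(z)=d_{\Omega_j}(z)\ge\epsilon_j$ and the ratio is at most $d_\Omega(z)/\epsilon_j\le\epsilon_j^{-1}$, after the harmless normalization $\mathrm{diam}\,\Omega\le 1$. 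Raising to the $2k$-th power and integrating gives the claim.

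Inequality (\ref{avedOmegacapegageminpsijghtk1X}) follows by \emph{exactly} the same argument, with $|\bar\partial\psi_j\wedge u|$ replacing $|\psi_j u|$, since $\mathrm{Supp}\,\bar\partial\psi_j\subset\mathrm{Supp}\,\psi_j$ so the same pointwise distance comparison applies on that support. The only mildly technical ingredient in the whole proof is the geodesic continuation needed in the first step; once the identity $d_{\Omega\cap\Omega_j}=\min(d_\Omega,d_{\Omega_j})$ is in hand, everything else is a routine pointwise estimate followed by integration, and no $L^2$ machinery is required.
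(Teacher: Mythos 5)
Your proof is correct and follows essentially the same route as the paper's: both rest on the pointwise identity $d_{\Omega\cap\Omega_j}=\min(d_\Omega,d_{\Omega_j})$ (which the paper simply asserts and you justify in more detail), the observation that $d_{\Omega_j}\ge\epsilon_j$ on $\mathrm{Supp}\,\psi_j$, and the normalization $d_\Omega\le 1$ to get $d_{\Omega\cap\Omega_j}\ge\min(d_\Omega,\epsilon_j)\ge\epsilon_j\,d_\Omega$ there. Your case analysis is just an unfolding of that last inequality, so the two arguments coincide.
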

Let us observe that $Supp\,(\psi_j u)\subset \Omega\cap \Omega_j$ and $Supp\,(\bar\partial\psi_j\wedge u)\subset \Omega\cap \Omega_j$  
so that integrating $\psi_j u$ (resp. $\bar\partial\psi_j\wedge u$) on $\Omega$ or on $\Omega\cap \Omega_j$ 
gives the same result 
but we will need to consider later this extension of $\psi_j u$ to $\Omega$ 
(by zero outside its support).

\begin{proof}  For $z\in\Omega\cap\Omega_j$,
 we have: 
$d_{\Omega\cap\Omega_j}(z)=\min(d_{\Omega}(z), d_{\Omega_j}(z))\le d_{\Omega}(z)$ 
and therefore:
\begin{equation}\label{avedOmegacapegageminpsij1ome}
\int_{\Omega\cap \Omega_i}\vert u\vert^2\lbrack d_{\Omega\cap\Omega_i}(z)\rbrack^{2k}d\lambda
\le\int_{\Omega}\vert u\vert^2\lbrack d_{\Omega}(z)\rbrack^{2k}d\lambda 
\end{equation}
 
On the other hand, we define $\epsilon_j:=\min_{z\in Supp\,\psi_j} d_{\Omega_j}(z)=\min_{z\in Supp\,\psi_j} d(z,\partial\Omega_j)$
 ($0<\epsilon_j\le 1$)
 so that for $z\in Supp\, \psi_j$, 
we have: $d_{\Omega\cap\Omega_j}(z)=\min (d_{\Omega}(z),d_{\Omega_j}(z))\ge\min (d_{\Omega}(z),\epsilon_j)\ge\epsilon_j d_{\Omega}(z)$ (as $d_{\Omega}(z)\le 1$)
 and then:
\begin{equation}\label{avedOmegacapegageminpsijerlvrty}
\int_{\Omega\cap \Omega_j}\vert \psi_j u\vert^2 \lbrack d_{\Omega}(z)\rbrack^{2k}d\lambda
\le
\epsilon_j^{-2k}
\int_{\Omega\cap \Omega_j}\vert \psi_j u\vert^2 \lbrack d_{\Omega\cap\Omega_j}(z)\rbrack^{2k}d\lambda
\end{equation}
and :
\begin{equation}\label{avedOmegacapegageminpsijerlvrtyvariante}
\int_{\Omega\cap \Omega_j}\vert \bar\partial\psi_j\wedge u\vert^2 \lbrack d_{\Omega}(z)\rbrack^{2k}d\lambda
\le
\epsilon_j^{-2k}
\int_{\Omega\cap \Omega_j}\vert\bar\partial \psi_j\wedge u\vert^2 \lbrack d_{\Omega\cap\Omega_j}(z)\rbrack^{2k}d\lambda.
\end{equation}

\end{proof}

We can now prove the following result by the same reasoning as in the case of $\C^n$. 
Moreover we consider currents in $\mathcal D'_{(p,q)}(X,F)$  with values 
in a given holomorphic vector bundle $F$
(to simplify we only consider current in $H^{-k}(X,F)$, $k\in\N$).

\begin{theorem}
\label{partialtemperedchomologyStein2}
 Let $\Omega$ be a relatively compact open Stein subset of a Stein manifold $X$ and $F$ be a given Hermitian holomorphic vector bundle on $X$.
Then for every current $f$ of bidegree $(p,q+1)$ on $X$ with values in $F$ (and with compact support in $X$) 
which is $\bar\partial$-closed on $\Omega$,
 there exists a current $w$ of bidegree $(p,q)$ on $X$ with values in $F$ (with compact support) such that:
\begin{equation}
\label{artialtemperedchomolog56}
\bar\partial w=f,
\end{equation}
in $\Omega$. Moreover if $f$ is in $H^{-k}_{(p,q+1)}(X,F)$ for some  $k\in\N$, we can find a solution $w$
in $H_{(p,q)}^{-n-1-r}(X,F)$ with $r=k(4n+2)-3n-1$, 
more precisely $w=u+v$, $u_{\vert\Omega}\in L_{(p,q)}^{2,r}(\Omega,F)$, 
$u\in H_{(p,q)}^{-n-1-r}(X,F)$
 and $v\in H_{(p,q)}^{-k+1}(X,F)$.
\end{theorem}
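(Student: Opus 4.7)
The plan is to reduce Theorem \ref{partialtemperedchomologyStein2} to an iterated application of Proposition \ref{partialtemperedchomologyescrtyX} in local charts of $X$, closed off by one final application of Demailly's Theorem \ref{beanopenpseudoconvex3SteinDem12}.

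First I would cover a neighborhood of $\bar\Omega$ by finitely many relatively compact coordinate charts $U_1,\dots,U_N$, each biholomorphic to a bounded open Stein subset of $\C^n$ and on which $F$ is holomorphically trivializable, and fix a $C^\infty$ partition of unity $(\psi_\ell)_{1\le\ell\le N}$ subordinate to this cover with $\sum_\ell \psi_\ell = 1$ in a neighborhood of $\bar\Omega$ and each $\psi_\ell$ compactly supported in $U_\ell$. The body of the proof is an induction on $j = 0, 1, \dots, k$: at step $j$ I maintain a decomposition $f = \bar\partial W_j + f_j$ in $\Omega$, where $W_j$ is a compactly supported current on $X$ and $f_j = g_j + h_j$ is $\bar\partial$-closed on $\Omega$ with $g_j|_\Omega \in L^{2,r_j}_{(p,q+1)}(\Omega, F)$ and $h_j \in H^{-k+j}_{(p,q+1)}(X, F)$. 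The base case is $W_0 = 0$, $g_0 = 0$, $h_0 = f$, $r_0 = 0$.

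For the inductive step, for each $\ell$ I apply Proposition \ref{partialtemperedchomologyescrtyX} in the chart $U_\ell$ (via the identification with $\C^n$ and the trivialization of $F$) to a suitable compactly supported cutoff of $f_j$, obtaining a local solution $w_{j+1,\ell} = u_{j+1,\ell} + v_{j+1,\ell}$ of $\bar\partial w_{j+1,\ell} = f_j$ on $U_\ell \cap \Omega$, with $u_{j+1,\ell} \in L^{2,r_j + 4n + 1}_{(p,q)}$ and $v_{j+1,\ell} \in H^{-k+j+1}_{(p,q)}$. I then patch using the partition of unity, setting $\tilde W_{j+1} := \sum_\ell \psi_\ell w_{j+1,\ell}$ (extended by zero) and $W_{j+1} := W_j + \tilde W_{j+1}$. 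Since $\sum_\ell \psi_\ell = 1$ on $\Omega$, the Leibniz rule gives
\[
f_{j+1} := f - \bar\partial W_{j+1} = -\sum_\ell \bar\partial \psi_\ell \wedge w_{j+1,\ell} \quad \text{on } \Omega,
\]
which is still $\bar\partial$-closed there and splits into the new $g_{j+1} \in L^{2,r_{j+1}}(\Omega, F)$ coming from the $u_{j+1,\ell}$ and the new $h_{j+1} \in H^{-k+j+1}(X, F)$ coming from the $v_{j+1,\ell}$. Lemma \ref{partialtemperedchomologyescrtynlminX} is the crucial tool here: it converts the local $L^{2,\cdot}$ control with respect to $d_{U_\ell \cap \Omega}$ into global $L^{2,\cdot}$ control with respect to $d_\Omega$, using that $\bar\partial\psi_\ell$ is supported away from $\partial U_\ell$. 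Between iterations I re-extend $g_{j+1}$ to a global distribution on $X$ via Theorem \ref{w(z)=w(zeta),rhoepsilon9on}, absorbing the corresponding loss into $r_{j+1}$.

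After $k$ iterations the Sobolev defect of $h_k$ has been driven down to $0$, so $h_k \in L^2$ and hence $f_k \in L^{2,r_k}_{(p,q+1)}(\Omega, F)$. I would then apply Demailly's Theorem \ref{beanopenpseudoconvex3SteinDem12} on $\Omega$ with weight $[d(z,\partial\Omega)]^{2 r_k}$ to produce a final solution $u_\infty \in L^{2,r_k}_{(p,q)}(\Omega, F)$ of $\bar\partial u_\infty = f_k$ on $\Omega$. Extending $u_\infty$ to $X$ via Theorem \ref{w(z)=w(zeta),rhoepsilon9on} applied chart by chart yields a distribution in $H^{-n-1-r_k}_{(p,q)}(X, F)$, and $w := W_k + u_\infty$ is the required global solution with the claimed decomposition $w = u + v$. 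The principal obstacle is the regularity bookkeeping through the iteration: verifying that polynomial $L^{2,\cdot}$ control is preserved under partition-of-unity multiplication (the role of Lemma \ref{partialtemperedchomologyescrtynlminX}), and propagating the accumulated loss from the re-extension step between iterations to reach precisely the exponent $r = k(4n+2) - 3n - 1$ stated in the theorem.
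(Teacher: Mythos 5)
Your proposal is correct and follows essentially the same route as the paper: local solutions in charts via Proposition \ref{partialtemperedchomologyescrtyX}, patching with a partition of unity, the distance-comparison Lemma \ref{partialtemperedchomologyescrtynlminX}, iteration until the $H^{-k+j}$ defect reaches $0$, and a final application of Demailly's Theorem \ref{beanopenpseudoconvex3SteinDem12} followed by the extension Theorem \ref{w(z)=w(zeta),rhoepsilon9on}. The only adjustments needed are in the bookkeeping you already flag: the base case should take $g_0=0\in L^{2,k}$ (so that the hypothesis $s\le k$ of Proposition \ref{partialtemperedchomologyescrtyX} holds at the first step), and the paper gets the stated exponent $r=k+n+(k-1)(4n+1)$ by using the sharper gain $k+n$ of Remark \ref{Proofoftheorem2remarkprecise} at the first iteration and $4n+1$ thereafter.
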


\begin{proof}
 
Let us assume that $f\in H^{-k}_{(p,q+1)}(X, F,loc)$. 
We will prove that we can find a solution $u\in H^{-n-1-r}_{(p,q)}(X, F,loc)$.\\
By considering local charts of $X$, 
we will locally reduce the problem to the case of $\C^n$ 
and using a partition of the unity, 
we will patch together these local solutions 
to obtain a first approximate global solution $\tilde w_1$ 
such that $f_1:=f-\bar\partial \tilde w_1$ has in some sense better regularity than $f$ 
and then we iterate the construction (replacing $f$ by $f_1$)
until $f_1$ becomes enough regular so that we can use J-P. Demailly's $L^2$-estimate for $\bar\partial$
 (i.e. until we have found in the temperate cohomology class of $f$ a new current $f_1$ such that ${f_1}_{\vert\Omega}\in L^{2,k+r}_{(p,q+1)}(\Omega, F)$ for some $r$ large enough).\\
 Using local charts on $X$ and Borel-Lebesgue lemma,
 we can find a finite open covering of the compact set $\bar\Omega$
 by relatively compact open subsets $\Omega_j$ of $X$, $1\le j\le N$ 
such that every $\bar \Omega_j$ is  contained in a geodesic chart for the given Riemannian metric 
and every $\Omega_j$ is biholomorphic to a bounded open ball $U_j:=B_j(z_j, r_j)$ of $\C^n$,
by a local biholomorphic map $\phi_j$ 
defined on a neighborhood of $\bar\Omega_j\subset X$ 
and taking its values into $\C^n$ 
($z_j\in\phi_j(\bar\Omega)$, $r_j>0$). 
Moreover we can also suppose (by shrinking enough each $\Omega_j$)
 that the exponential map sending
 the tangent space $T_{z_j} X$ (of $X$ at $z_j$) into $X$ 
is a diffeomorphism of a open ball in $T_{z_j} X$ 
onto a geodesic open ball of center $z_j$ containing $\bar\Omega_j$ so that the geodesic  distance 
and the Euclidian distance coming from $\C^n$ (by means of $\phi_j$) are equivalent on a neighborhood of $\bar\Omega_j$
 and so that the spaces $L^{2,k}_{(p,q)}(\Omega_j\cap\Omega, F)$  ($k\in \N$) associated with the geodesic distance to $\partial(\Omega_j\cap\Omega)$ 
or with the Euclidian distance to $\partial(\Omega_j\cap\Omega)$ coming from $\C^n$ (by means of $\phi_j$) are the same.
Finally we can also suppose that the given holomorphic vector bundle $F$ is trivial 
on a neighborhood of each $\bar\Omega_j$.\\
For every bounded Stein open subset $\phi_j(\Omega\cap\Omega_j)\subset \phi_j(\Omega_j)=:U_j\subset\C^n$ 
we use the construction made in the case of $\C^n$ 
(\emph{i.e.} remark \ref{Proofoftheorem2remarkprecise}, 
$F$ is trivial on a neighborhood of $\bar\Omega_j$)
so that we can construct $u_j\in H^{-k-2n-1}_{(p,q)}(\Omega_j,F)$ 
with ${u_j}_{\vert\Omega\cap\Omega_j}\in L^{2,k+n}_{(p,q)}(\Omega_j\cap\Omega, F)$ 
and (it is the key point)  $v_j\in H^{-k+1}_{(p,q)}(\Omega_j, F)$ 
 such that $w_j:=u_j+v_j$ is a solution 
of $f=\bar\partial w_j=\bar\partial (u_j+ v_j)$ in $\Omega\cap\Omega_j$ 
and $w_j\in H^{-k-2n-1}_{(p,q)}(\Omega_j, F)$.\\
 Let $\psi_j\in\mathcal D(\Omega_j))\ge 0$ be a partition of unity such that:
$\sum_{j=1}^{j=N} \psi_j =1$ on a neighborhood of $\bar\Omega$.
 Then $\psi_j w_j\in H^{-k-2n-1}_{(p,q)}(X, F)$
 and  a key point is that ${\psi_j u_j}_{\vert\Omega}\in L^{2,k+n}_{(p,q)}(\Omega, F)$ 
for the distance $d_{\Omega}(z)$ 
using lemma \ref{partialtemperedchomologyescrtynlminX} inequality  (\ref{avedOmegacapegageminpsijghtkX})
 (we apply with $u=u_j$)
and then  for $i\neq j$, ${\psi_j u_j}_{\vert\Omega\cap\Omega_i}\in L^{2,k+n}_{(p,q)}(\Omega\cap\Omega_i, F)$ 
for the distance $d_{\Omega\cap\Omega_i}(z)$ 
using lemma \ref{partialtemperedchomologyescrtynlminX} inequality (\ref{avedOmegacapegageminpsijwedgeX}) 
(we apply with $u=\psi_j u_j$ and to $\Omega_i$ instead of $\Omega_j$).
 Particularly $({\sum_{j=1}^{j=N}\psi_j u_j})_{\vert\Omega}\in L^{2,k+n}_{(p,q)}(\Omega, F)$ 
for the distance $d_{\Omega}(z)$ 
and for all $1\le i\le N$, $({\sum_{j=1}^{j=N}\psi_j u_j})_{\vert\Omega\cap\Omega_i}\in L^{2,k+n}_{(p,q)}(\Omega\cap\Omega_i, F)$ 
for the distance $d_{\Omega\cap\Omega_i}(z)$.

 Gluing together the local solutions $w_j$, 
we define $\tilde w_1 =\sum_{j=1}^{j=N}\psi_j w_j=\tilde u_1+\tilde v_1$, 
with $\tilde u_1 =\sum_{j=1}^{j=N}\psi_j u_j$ 
and  
$\tilde v_1 =\sum_{j=1}^{j=N}\psi_j v_j$
 so that
$\tilde w_1\in H^{-k-2n-1}_{(p,q)}(X, F)$,
$\tilde {u_1}_{\vert\Omega}\in L^{2,k+n}_{(p,q)}(\Omega, F)$, 
$\tilde v_1\in H^{-k+1}_{(p,q)}(X, F)$ 
and we obtain:
\begin{equation}\label{artialtemperedchomolog5674}
\bar\partial \tilde w_1= \sum_{j=1}^{j=N}\psi_j\bar\partial w_j+\sum_{j=1}^{j=N}(\bar\partial \psi_j)\wedge w_j.
\end{equation}
 We have: $\bar\partial w_j=f$ in $\Omega\cap\Omega_j$  and  $Supp\,\psi_j\subset \Omega_j$ 
so that equation (\ref{artialtemperedchomolog5674}) implies in $\Omega$ :
\begin{equation}\label{artialtemperedchomolog5684}
\bar\partial\tilde w_1=(\sum_{j=1}^{j=N}\psi_j)f+\sum_{j=1}^{j=N}\bar\partial\psi_j\wedge w_j,
\end{equation}
 or (in $\Omega$) :
\begin{equation}\label{artialtemperedchomolog5684parz}
\bar\partial\tilde w_1=f+\sum_{j=1}^{j=N}\bar\partial\psi_j\wedge w_j.
\end{equation} 
We set:
$f_1:=-\sum_{j=1}^{j=N}\bar\partial\psi_j\wedge w_j$, 
$g_1:=-\sum_{j=1}^{j=N}\bar\partial\psi_j\wedge u_j$ 
and $h_1:=-\sum_{j=1}^{j=N}\bar\partial\psi_j\wedge v_j$,
 so that we obtain:
\begin{equation}\label{artialtemperedchomolog5694}
f_1=f-\bar\partial \tilde w_1=g_1+h_1.
\end{equation} 

$f_1$ is $\bar\partial$-closed  on $\Omega$ 
and we have $f_1:=g_1+h_1$ with $g_1\in H^{-k-2n-1}_{(p,q+1)}(X, F)$ and $h_1\in H^{-k+1}_{(p,q+1)}(X, F)$ (as each $v_j\in H^{-k+1}_{(p,q)}(\Omega_j, F)$).
Moreover according to lemma \ref{partialtemperedchomologyescrtynlminX}, 
${g_1}_{\vert\Omega}\in L^{2,k+n}_{(p,q+1)}(\Omega, F)$ 
for the distance $d_{\Omega}(z)$ (inequality (\ref{avedOmegacapegageminpsijghtk1X})) 
and for all $i$,
  ${g_1}_{\vert\Omega\cap\Omega_i}\in L^{2,k+n}_{(p,q+1)}(\Omega\cap\Omega_i, F)$,
	for the distance $d_{\Omega\cap\Omega_i}(z)$ 
	(inequality (\ref{avedOmegacapegageminpsijwedgeX}) 
	applied in $\Omega_i$ to each form $\bar\partial\psi_j \wedge u_j$ of bidegre (p,q+1) with $j\neq i$).
Let us observe that $g_1$ or $h_1$ 
are not necessarly $\bar\partial$-closed (that is the difficulty).
The key point is that $f_1$ is better than $f$ 
in the sense that $h_1\in H^{-k+1}_{(p,q+1)}(X, F)$ has a best regularity than $f\in H^{-k}_{(p,q+1)}(X, F)$  
and $g_1$ has a $L^2$ polynomial growth in $\Omega$.\\
We have built a first approximate global solution $\tilde w_1$ 
such that $f_1:=f-\bar\partial \tilde w_1$ is better than $f$. 
Applying proposition \ref {partialtemperedchomologyescrtyX} and lemma
\ref{partialtemperedchomologyescrtynlminX}
to each Stein open subset $\phi_j(\Omega\cap\Omega_j)\subset U_j\subset\C^n$, 
we can iterate this construction 
and construct a finite sequence of currents on $X$: $f_{l+1}=g_{l+1}+h_{l+1}=f_l-\bar\partial \tilde w_{l+1}$, $l\in\N$, $f_0:=f$,
such that $h_{l+1}\in H^{-k+l+1}_{(p,q+1)}(X, F)$ has a strictly better regularity 
than $h_l\in H^{-k+l}_{(p,q+1)}(X, F)$ 
and ${g_{l+1}}_{\vert\Omega}\in L^{2,k+n+l(4n+1)}_{(p,q+1)}(\Omega, F)$ 
 has  still a $L^2$ polynomial growth in $\Omega$, moreover $g_{l+1}\in H^{-k-2n-1-l(4n+1)}_{(p,q+1)}(X, F)$. 
Given $f_l=g_l+h_l$, we apply proposition \ref {partialtemperedchomologyescrtyX} to each $\phi_j(\Omega\cap\Omega_j)\subset\C^n$, 
we construct a solution $w_{l+1,j}=u_{l+1,j}+v_{l+1,j}$ of $\bar\partial w_{l+1,j}=f_l$ in $\Omega\cap\Omega_j\subset X$
 ($\Omega\cap\Omega_j$ is biholomorphic to $\phi_j(\Omega\cap\Omega_j)$).\\
We set  $\tilde w_{l+1}:=\sum_{j=1}^{j=N}\psi_j w_{l+1,j}=\tilde u_{l+1} + \tilde v_{l+1}$ 
  with $\tilde u_{l+1}:=\sum_{j=1}^{j=N}\psi_j u_{l+1,j}$,
 $\tilde v_{l+1}:=\sum_{j=1}^{j=N}\psi_j v_{l+1,j}$  
and then $f_{l+1}:=f_l-\bar\partial \tilde w_{l+1}=-\sum_{j=1}^{j=N}\bar\partial\psi_j\wedge w_{l+1,j}=g_{l+1}+h_{l+1}$, 
 with $g_{l+1}:=-\sum_{j=1}^{j=N}\bar\partial\psi_j\wedge u_{l+1,j}$ 
and $h_{l+1}:=-\sum_{j=1}^{j=N}\bar\partial\psi_j\wedge v_{l+1,j}$.\\
The estimates we need to iterate are straightforward consequences 
of proposition
 \ref {partialtemperedchomologyescrtyX} 
(we use replacing $k$ by $k+n+(l-1)(4n+1)$ and $s$ by $k-l$ at the step $l\ge1$)
and lemma
\ref{partialtemperedchomologyescrtynlminX}.
Of course the polynomial $L^2$ growth of $g_{l+1}$ is larger than that of $g_l$ 
but it does'nt matter 
as we are able to solve the $\bar\partial$ equation in $\Omega$ 
for any $L^2$ polynomial growth.\\
Finally for $l=k$ (it is the last key point)  $h_{k}\in H_{(p,q+1)}^{0}(X, F)=L_{(p,q+1)}^{2,0}(X, F)$,
particularly  ${h_{k}}_{\vert\Omega}\in L_{(p,q+1)}^{2,0}(\Omega, F)$,
so that $f_{k}$ verifies the $L^2$-estimate we need: 
$f_{k}=g_{k}+h_{k}$ has polynomial $L^2$ growth in $\Omega$ : 
$\int_{\Omega}\vert f_{k}\vert^2  \lbrack d_{\Omega}(z)\rbrack^{2r}  d\lambda<+\infty$
 with $r=k+n+(k-1)(4n+1)=k(4n+2)-3n-1$.
 We can now use J-P. Demailly's theorem \ref{beanopenpseudoconvex3SteinDem12}
to solve  in $\Omega$ the equation $\bar\partial \tilde w_{k+1}=f_{k}$
 with  $\tilde w_{k+1}\in L^{2,r}_{(p,q)}(\Omega,F)$
so that we obtain: $f= \bar\partial \tilde w$ 
with $\tilde w:= \tilde w_{k+1}+\Sigma_{l=1}^{l=k} \,\tilde w_{l}=u+v$, 
 $u:= \tilde w_{k+1}+\Sigma_{l=1}^{l=k} \,\tilde u_{l}$ 
and $v:= \Sigma_{l=1}^{l=k} \,\tilde v_{l}$, $u\in L^{2,r}_{(p,q)}(\Omega, F)$, $v\in H^{-s+1}_{(p,q)}(X,F)$. 
Using theorem \ref{w(z)=w(zeta),rhoepsilon9on} 
we get an extension of $u$ in $H_{(p,q)}^{-n-1-r}(X,F)$
 so that $\tilde w\in H_{(p,q)}^{-n-1-r}(X,F)$. 
\end{proof}

As explained in P. Schapira's article [Scha2020] (remark 2.3.4), 
theorem \ref{partialtemperedchomologyStein2} implies the following result 
 (theorem 2.3.3 in [Scha2020]). 
We refer to [Scha2020] for the definitions of the (derived) sheave
$\mathcal O_{X_{\mathrm{sa}}}^{\mathrm {tp}}$ 
of temperate holomorphic functions 
(defined on the subanalytic site $X_{\mathrm{sa}}$)
 and of other objects associated with.

\begin{theorem}
\label{partialtemperedchomologyStein2PS}\rm{(P. Schapira)}
 Let $X$ be a complex Stein manifold and let $\Omega$ be a subanalytic relatively compact Stein open subset of $X$ 
contained in a Stein compact subset $K$ of $X$. Let $\mathcal F$ be a coherent $\mathcal O_X$-module defined on a neighborhood of $K$.
Then  $\mathrm{R}\Gamma(\Omega;\mathcal F^{\mathrm{tp}})$ is concentrated in degree 0.

\end{theorem}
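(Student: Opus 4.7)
The plan is to use Theorem \ref{partialtemperedchomologyStein2} as the analytic engine, and translate it into the derived statement via Schapira's formalism on the subanalytic site $X_{\mathrm{sa}}$. First, I would recall that $\mathcal O_{X_{\mathrm{sa}}}^{\mathrm{tp}}$ fits in a Dolbeault-type resolution
\begin{equation*}
0 \to \mathcal O_{X_{\mathrm{sa}}}^{\mathrm{tp}} \to \mathcal{C}^{\mathrm{tp},(0,0)} \xrightarrow{\bar\partial} \mathcal{C}^{\mathrm{tp},(0,1)} \xrightarrow{\bar\partial} \cdots \xrightarrow{\bar\partial} \mathcal{C}^{\mathrm{tp},(0,n)} \to 0
\end{equation*}
by sheaves of temperate currents of bidegree $(0,q)$ on $X_{\mathrm{sa}}$. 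Exactness of this sequence of sheaves, tested on the cofinal family of subanalytic relatively compact Stein open subsets of $X$, is precisely what Theorem \ref{partialtemperedchomologyStein2} delivers, with the crucial extra information that the solutions lie in the temperate class, i.e. extend to currents on $X$.

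Next, I would invoke the standard acyclicity result from the Kashiwara--Schapira formalism ([KS1996], [Scha2017]): the sheaves $\mathcal{C}^{\mathrm{tp},(0,q)}$ are $\Gamma(\Omega;-)$-acyclic on subanalytic relatively compact opens $\Omega$ (they play the role of soft sheaves adapted to the subanalytic topology). Granting this, $\mathrm R\Gamma(\Omega; \mathcal O_{X_{\mathrm{sa}}}^{\mathrm{tp}})$ is represented by the complex of global temperate currents on $\Omega$, and Theorem \ref{partialtemperedchomologyStein2} yields exactness in positive degrees, establishing concentration in degree $0$ for $\mathcal F = \mathcal O_X$. The same argument, using the $F$-valued version of Theorem \ref{partialtemperedchomologyStein2}, handles any locally free $\mathcal O_X$-module of finite rank.

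For general coherent $\mathcal F$ defined on a neighborhood of the Stein compact $K$, I would use that $K$ admits a fundamental system of Stein open neighborhoods, on which Cartan's Theorem B together with syzygies produces a finite resolution $\mathcal L^\bullet \to \mathcal F$ by free $\mathcal O_X$-modules of finite rank. Since $(-)^{\mathrm{tp}}$ is defined as a derived functor, and its application to each locally free $\mathcal L^p$ is unambiguous, $\mathcal F^{\mathrm{tp}}$ is represented by $(\mathcal L^\bullet)^{\mathrm{tp}}$. The hypercohomology spectral sequence
\begin{equation*}
E_1^{p,q} = \mathrm H^q(\Omega; (\mathcal L^p)^{\mathrm{tp}}) \Rightarrow \mathrm H^{p+q}(\Omega; \mathcal F^{\mathrm{tp}})
\end{equation*}
collapses onto the row $q=0$ by the vanishing already proved, giving concentration in degree $0$ of $\mathrm R\Gamma(\Omega; \mathcal F^{\mathrm{tp}})$.

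The main obstacle is the acyclicity step: it is what converts the concrete analytic statement of Theorem \ref{partialtemperedchomologyStein2} into a statement about derived global sections on the subanalytic site, and it is where the subanalyticity hypothesis on $\Omega$ is essential. Without the subanalytic framework, neither the Dolbeault sheaf resolution of $\mathcal O^{\mathrm{tp}}$ nor the $\Gamma(\Omega;-)$-acyclicity of the $\mathcal C^{\mathrm{tp},(0,q)}$ would be available, and the passage from sections of the resolving sheaves to $\mathrm R\Gamma$ could introduce correction terms that do not vanish a priori; once this acyclicity is granted, the result follows formally from the analytic vanishing and standard homological algebra on $X_{\mathrm{sa}}$.
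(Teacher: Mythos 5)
The paper does not prove this statement at all: it simply records it as Theorem 2.3.3 of [Scha2020] and defers to Schapira's remark 2.3.4 for the deduction from Theorem \ref{partialtemperedchomologyStein2}. Your sketch is exactly that intended deduction (temperate Dolbeault resolution of $\mathcal O^{\mathrm{tp}}$ on $X_{\mathrm{sa}}$, $\Gamma(\Omega;-)$-acyclicity of the temperate current sheaves from [KS1996], then a finite free resolution on a Stein neighborhood of $K$ and the hypercohomology spectral sequence), so it is consistent with the route the paper points to, granting the cited subanalytic machinery.
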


	\bibliographystyle{apalike}
\bibliography{biblio} 
[Dem1982] Demailly, J.P., 
\textit{Estimations $\mathrm {L}^2$ pour l'op\'erateur $\bar{\partial }$ d'un fibr\'e vectoriel holomorphe semi-positif au-dessus d'une vari\'et\'e k\"ahl\'erienne compl\`ete}
in \textit{Annales scientifiques de l' \'Ecole Normale Sup\'erieure} \textbf{15}, 457-511, (1982).

[Dem2012] Demailly, J.P.,\textit{ Complex Analytic and Differential Geometry.} Open Content Book. Chapter VIII, paragraph 6, Theorem 6.9, p. 379 (2012).

[Dol1956]	Dolbeault, P., \textit{Formes diff\'erentielles et cohomologie sur une vari\'et\'e analytique complexe}, I, in \textit{Ann. of Math.} 64, 83-130, (1956); II, in \textit{Ann. of Math.} \textbf{65}, 282-330, (1957). 

[Ele1975] Elencwajg, G.,\textit{Pseudoconvexit\'e locale dans les vari\'et\'es k\"ahl\'eriennes,} in \textit{Annales de l'Institut Fourier,} \textbf{25}, 295-314, (1975).

[H\"or1965] H\"ormander, L., $L^2$\textit{Estimates and Existence Theorem for the $\bar\partial$ \textit{Operator.}}, in \textit{Acta Math.}, \textbf{113}, 89-152 (1965). 

[H\"or1966] H\" ormander, L., \textit{An Introduction to Complex Analysis in Several Variables,} (1966), 3d edition,
 \textit{North Holland Math. Libr.}, Vol.\textbf{7}, Amsterdam, London, (1990).

[H\"or1983] H\" ormander, L., \textit{The Analysis of Linear Partial Diffenrential Operators I},
 \textit{Grundlehren der Mathematischen Wissenschhaften}, Vol. \textbf{256}, Springer Verlag, Berlin, Heidelberg, New York, Tokyo (1983).

[KS1996] Kashiwara, M. and Schapira, P., \textit{Moderate and Formal Cohomology associated with Constructibles Sheaves}, in \textit{M\'emoires Soc. Math. France,} Vol \textbf{64}, 76 pp, (1996).

[Le1964] Lelong, P., \textit{Fonctions enti\`eres (n variables) et fonctions plurisousharmoniques d'ordre fini dans $\C^n$}, \textit{Journal d'Analyse (Jerusalem)}, \textbf{12}, 365-406 (1964).  

[Scha2017] Schapira, P., \textit{Microlocal Analysis and beyond,} in \textit{arXiv}. 1701.08955 [math] (Pdf) (2017)

[Scha2020] Schapira, P., \textit{Vanishing of Temperate Cohomology on Complex Manifolds}, in \textit{arXiv.} 2003.11361 [math CV] (Pdf) (2020) 

[Schw1950] Schwartz, L., \textit{Th\'eorie des distributions.} Tome I, in \textit{Actualit\'es Sci. Ind.}, vol. 1091, Publ. Inst. Math. Univ. Strasbourg, no IX, Paris: Hermann \& Cie., (1950), 2e \'ed. (1957); r\'e\'ed. (1966).

[Siu1970] Siu, Y.T., \textit{Holomorphic Functions of Polynomial Growth on Bounded Domains.}, in \textit{Duke Mathematical Journal}, \textbf{37}, 77-84, (1970).

[Sk1971] Skoda, H., \textit{$d^{\prime \prime } $-cohomologie \`a croissance lente dans C$^{n}$}, in  \textit{Annales scientifiques de l'\'Ecole Normale Sup\'erieure,} S\'erie 4, Tome 4, no. 1, 97-120 (1971).

[Sk2020] Skoda, H., \textit{A Dolbeault lemma for temperate currents}, in \textit{arXiv.} 2003.11437 v2 [math CV] (Pdf) (2020)

[Weil1958] Weil, A., \textit{Introduction \`a l'\'etude des vari\'et\'es k\"ahl\'eriennes}, \textit{Actualit\'es scientifiques et industrielles}, 1267, Hermann, Paris, (1958).

\end{document}